\documentclass[oneside,reqno,12pt,a4paper]{amsart}
\usepackage[T1]{fontenc}
\usepackage[australian]{babel}
\usepackage[margin=2.5cm]{geometry}
\usepackage{amsmath,amsfonts,amssymb,amscd,amsthm}
\usepackage[usenames,dvipsnames,svgnames]{xcolor}
\usepackage[utf8]{inputenc} 
\usepackage{graphicx} 
\graphicspath{ {} }
\usepackage{caption}
\usepackage{subcaption}
\usepackage{doi}
\usepackage{bm}
\usepackage{amssymb}
\usepackage{upgreek}
\usepackage{mathrsfs}
\usepackage{makecell}
\usepackage{hyperref}
\usepackage{acronym}
\usepackage{listings}
\usepackage{textgreek}
\usepackage{url}
\usepackage{color}
\usepackage{framed}
\usepackage{verbatim}
\usepackage{fancyvrb}
\usepackage{newtxtext}
\usepackage{newtxmath} 
\usepackage{microtype} 
\usepackage[final]{pdfpages}
\usepackage{tikz}
\usepackage{booktabs}
\usepackage{multirow}
\usepackage{mathtools}
\usepackage[normalem]{ulem}
\mathtoolsset{showonlyrefs}
\usepackage{csquotes}
\usepackage[backend=biber,
defernumbers=true,
maxbibnames=5,
style=numeric-comp,
isbn=false,
bibencoding=utf8,
safeinputenc, 
url=true,
doi=true,
giveninits=true]{biblatex}
\hypersetup{ 
breaklinks=true,
bookmarksopen=true,
pdftitle={Unfitted Darcy},    
pdfauthor={Badia et al},     
colorlinks=true,       
linkcolor=black,          
citecolor=blue,        
filecolor=black,      
urlcolor=blue           
}
\definecolor{bg}{rgb}{0.93,0.93,0.93}

\AtEveryBibitem{\clearfield{month}}
\AtEveryBibitem{\clearfield{day}}
\AtEveryBibitem{%
	\ifboolexpr{ not test {\iffieldundef{doi}} }{%
		\clearfield{url}%
		\clearfield{urldate}%
	}{}
}

\newtheorem{theorem}{Theorem}[section]
\newtheorem{lemma}[theorem]{Lemma}

\newtheorem{assumption}[theorem]{Assumption}
\newtheorem{remark}[theorem]{Remark}

\numberwithin{equation}{section}
\numberwithin{figure}{section}
\numberwithin{table}{section}

\usepackage[export]{adjustbox}
\acrodef{pde}[PDE]{partial differential equation}
\acrodef{fe}[FE]{finite element}
\acrodef{fem}[FEM]{finite element method}
\acrodef{dof}[DOF]{degree of freedom}
\acrodefplural{dof}[DOFs]{degrees of freedom}
\acrodef{agfe}[AgFE]{aggregated finite element}
\acrodef{agfem}[AgFEM]{aggregated finite element method}
\acrodef{rt}[RT]{Raviart--Thomas}
\acrodef{bdm}[BDM]{Brezzi--Douglas--Marini}

\newcommand{\tnor}[1]{{\left\vert\kern-0.25ex\left\vert\kern-0.25ex\left\vert #1 
\right\vert\kern-0.25ex\right\vert\kern-0.25ex\right\vert}}
 \newcommand{\jump}[1]{[#1]}

\definecolor{shadecolor}{gray}{.92}
\definecolor{incolor}{rgb}{0,0,.7} 
\definecolor{outcolor}{rgb}{.65,0,0}
\definecolor{syntaxcolor}{rgb}{.65,0,0}

\def\jumpl{\lbrack\kern-0.1em\!\lbrack}
\def\jumpr{\rbrack\kern-0.1em\!\rbrack}
\def\jump#1{\jumpl #1 \jumpr}

\newcommand{\dv}[1]{\mathrm{div} \, #1}
\def\dom{\Omega}
\def\domact{\Omega_h}
\def\domext{\Omega^{\mathrm{ext}}}

\def\domin{\Omega_h^{\mathrm{in}}}

\def\bouu{\Gamma_{u}}
\def\boup{\Gamma_p} 
\def\Fstab{\mathcal{F}_{h}^{\mathrm{stab}}}

\def\Vh{V_{\mathrm{d},h}}

\newcommand{\Vhc}[2]{V_{#1,h}^{#2}}

\def\Qh{V_{\mathrm{0},h}}

\def\V{\mathcal{V}_{\mathrm{d}}}

\def\Q{\mathcal{V}_{\mathrm{0}}}

\def\Hstr{\mathscr{H}^{\mathrm{*}}_h}
\def\Hv{\mathscr{H}^{\mathrm{d}}_h}
\def\HvT{\mathscr{H}^{\mathrm{d}}_{h,T}}
\def\Hq{\mathscr{H}^{0}_h}

\def\extv{\mathscr{E}^{\mathrm{d}}_{h}}
\def\extq{\mathscr{E}^{0}_{h}}
\def\Pv{\mathscr{P}^{\mathrm{d}}_h}
\def\Pq{\mathscr{P}^{0}_{h}}

\def\Iv{\Pi_{\mathrm{d},h}}
\def\IvT{\Pi_{\mathrm{d},h,T}}
\def\Iq{\Pi_{0,h}}

\def\xivh{\tau_{\mathrm{d}}}
\def\xiqh{\tau_{0}}

\def\ahf{a_h}
\def\shpf{s^0_{h}}
\def\shuf{s^{\mathrm{d}}_h}
\def\bbf{b}

\def\piulocal{\pi_{\mathrm{d},h,T}}

\def\piqst{\pi_{h}^{0}}

\def\uh{u_h}

\newcommand{\Bh}{{b}_h}  
\newcommand{\dimnor}[1]{#1}

\newcommand{\change}[2]{%
  \begingroup
  \if\relax\detokenize{#1}\relax\else\color{red}{#1}\fi%
  \if\relax\detokenize{#2}\relax\else{\textcolor{red}{\sout{#2}}}\fi%
  \endgroup
}

\def\Th{\mathcal{T}_h}  
\def\Thagb{\mathcal{T}_h^{\partial,\mathrm{ag}}}  
\def\Thag{\mathcal{T}_h^{\mathrm{ag}}}  

\begin{document}
\title[Unfitted finite element methods for Darcy]{Divergence-free unfitted finite element discretisations \\ for the Darcy problem}
\author{Santiago Badia$^{1,*}$}
\thanks{$^1$ School of Mathematics, Monash University, Clayton, Victoria 3800, Australia. $^2$ School of Computing, The Australian National University, Canberra ACT 2600, Australia. $^3$ Department of Mathematics, KTH Royal Institute of Technology, SE-100 44 Stockholm, Sweden. $^4$ Universidad Adventista de Chile, Casilla 7-D, Chill\'an, Chile. $^*$ Corresponding author. Email: santiago.badia@monash.edu.}
\author{Anne Boschman$^{1}$}
\author{Alberto F. Mart\'{\i}n$^2$}
\author{Erik Nilsson$^3$}
\author{Ricardo Ruiz-Baier$^{1,4}$}
\author{Sara Zahedi$^3$}
\date{}
\subjclass[2020]{Primary 65N30, 65N12; Secondary 76S05}
\keywords{Unfitted finite element methods, Darcy problem, div-conforming spaces, ghost-penalty stabilisation, mass conservation, pressure robustness}
\begin{abstract}
We develop an unfitted compatible finite element discretisation for the Darcy problem based on $H(\mathrm{div})$-conforming flux spaces and discontinuous pressure spaces. The method is designed to preserve pointwise discrete mass conservation while remaining robust in the presence of arbitrarily small cut cells arising from unfitted meshes. Robustness is achieved by combining an $L^2$-stabilisation of the flux with an additional mixed-term stabilisation that enhances pressure control without destroying the local conservation structure. We consider both cell-wise (bulk) and face-based ghost-penalty realisations of the stabilisation. Mixed boundary conditions are handled by weak imposition of both flux and pressure traces on unfitted boundaries. We prove stability and a priori error estimates with constants independent of the cut configuration, and establish pressure-robust flux error bounds in the case of pure pressure boundary conditions. We also introduce an augmented Lagrangian variant that improves control of the conservation constraint and is amenable to efficient preconditioning strategies. Numerical experiments for a range of cut configurations, boundary-condition regimes and parameter choices confirm the theoretical results, demonstrating optimal convergence, cut-independent conditioning and mass conservation up to solver tolerance.
\end{abstract}
 
\maketitle 

\section{Introduction}

Standard \ac{fe} methods typically require body-fitted triangulations of the physical domain in which one wants to discretise the \ac{pde}. These meshes are difficult to generate for complex or moving domains. 
In contrast, unfitted (or immersed) methods decouple the discretisation of the geometry from that of the \ac{pde} and employ simple background meshes that do not need to conform to the domain boundary \cite{acta25}. Instead of mesh generation algorithms, the geometrical discretisation relies on algorithms that compute the intersection between the background mesh and the geometry \cite{Badia-JComputPhys-2022k,Martorell-JComputPhys-2024h}, which is an embarrassingly parallel cell-wise operation. On these intersections, one must design quadrature rules using, for example, moment-fitting techniques \cite{saye2017implicit}. While this approach is highly efficient, it introduces the small cut-cell problem. This occurs when arbitrarily small intersections between the mesh and the boundary lead to severe numerical instabilities and ill-conditioning \cite{dePrenter-ArchComputMethodsEng-2023j}. To restore robustness, several stabilisation strategies have been developed. Ghost-penalty schemes stabilise the system by penalising derivative jumps across facets cutting the boundary \cite{burman_cutfem_2015}. Alternatively, cell-aggregation techniques, such as aggregated finite element methods (AgFEM), address the problem by extending well-posed degrees of freedom to ill-posed ones through discrete extension operators \cite{Badia2018-vw}. Most work on unfitted methods has focused on grad-conforming approximations and nodal Lagrangian FEM.

$H(\mathrm{div})$-conforming discretisations of the mixed Poisson or Darcy problem have been considered in \cite{Puppi-arXivmathNA-2021o}, where a ghost-penalty pressure-stabilisation term is added to achieve robustness with respect to small cut cells. However, the resulting method does not satisfy pointwise discrete mass conservation. Exact mass conservation has been attained in \cite{Frachon2024-fg} for an unfitted interface Darcy problem by using indefinite stabilisation terms that do not perturb the discrete mass-conservation equation, while the boundary remains body-fitted. Using ideas similar to those in \cite{Burman-SIAMJNumerAnal-2024q}, exact mass conservation is attained in \cite{Lehrenfeld-MathComput-2024j} in the boundary-unfitted setting. The method proposed in \cite{Lehrenfeld-MathComput-2024j} relies on an extension of the mass-conservation equation to an active mesh instead of ghost-penalty stabilisation. Although the method is analysed for pressure boundary conditions, an extension to mixed boundary conditions using Lagrange multipliers is also discussed. In addition, the weak imposition of flux boundary conditions in $H(\mathrm{div})$-conforming discretisations has been considered in \cite{Burman-JNumerMath-2022z} for the Darcy problem via a consistent penalty method; this method was previously applied to an interface problem in \cite{DAngelo-EsaimMathModelNumerAnal-2012r}.

In this work, we propose a novel unfitted finite element method for Darcy flow with mixed boundary conditions in the boundary-unfitted setting that satisfies pointwise discrete mass conservation and is robust with respect to cut-cell locations. The method incorporates weak imposition of flux and pressure boundary conditions at unfitted boundaries and relies on an $L^2$-stabilisation of the flux together with a mixed stabilisation term that provides enhanced control of the pressure, using ideas similar to those in \cite{Frachon2024-fg} for interface problems. The proposed stabilisation is a ghost-penalty method, which can be defined in terms of face jumps (as in \cite{Frachon2024-fg}) or bulk penalty terms (as in, for example, the original stabilisation proposed in \cite{Burman2010-tf} or the modification proposed in \cite{Badia2022-zq}). We prove that the method is stable, convergent and satisfies pointwise discrete mass conservation in the boundary-unfitted case with mixed boundary conditions. 
In addition, for the case of pure pressure boundary conditions, we prove a novel pressure-robust error estimate for the flux. We also consider an augmented Lagrangian formulation of the proposed method, which may be useful for designing effective preconditioners and for improving mass conservation in the presence of mixed boundary conditions.

{
To the best of our knowledge, this is the first rigorous stability and error analysis for a div-conforming Darcy discretisation with flux boundary conditions in the boundary-unfitted setting. The closest prior work, \cite{Lehrenfeld-MathComput-2024j}, provides analysis only for pure pressure boundary conditions; although its stabilisation differs from ours, the two approaches are related~\cite[Lemma~1]{Lehrenfeld-MathComput-2024j}.
}
{
The extension to flux boundary conditions is achieved through a deliberately non-symmetric Nitsche formulation designed to preserve pointwise mass conservation. The theoretical results are also new in the body-fitted setting; a body-fitted version of the Nitsche method was proposed in \cite{Burman-JNumerMath-2022z}, but without numerical analysis.
The weak imposition of flux boundary conditions in the unfitted case substantially complicates the analysis: one cannot define a test space that both vanishes on the flux boundary and satisfies a pressure inf-sup condition. Instead, we establish a global inf-sup condition.
}

\medskip\noindent\textbf{Outline of the paper.}
We begin with the geometrical discretisation in Section~\ref{sec:geom}. Next, we introduce the method and discuss its main features in Section~\ref{sec:formulation}. The stability analysis and a priori error estimates are presented in Sections~\ref{sec:stability} and~\ref{sec:error-estimates}, respectively. An augmented Lagrangian formulation of the method is presented in Section~\ref{sec:aug-lag}. The cell-wise (bulk) and face-based ghost-penalty stabilisation mechanisms are described in Section~\ref{sec:stab}. Finally, numerical experiments illustrating convergence and robustness with respect to cut configurations are reported in Section~\ref{sec:num-results}. We draw some conclusions in Section~\ref{sec:conclusions}.

\section{Geometrical discretisation}\label{sec:geom}

Let $\Omega \subset \mathbb{R}^d$, with $d$ the space dimension, be homeomorphic to an open polyhedral domain. $\Omega$ represents the physical domain of the \ac{pde} to be solved. Standard \ac{fe} methods are formulated on a so-called \emph{body-fitted} mesh, which is a partition of $\Omega$ (or of an approximation of it). Unfitted discretisation techniques, by contrast, do not rely on a \emph{body-fitted} mesh. Instead, the physical domain is embedded into a background domain $\Omega^{\mathrm{bg}}$, such that $\Omega \subset \Omega^{\mathrm{bg}}$.

Let $\mathcal{T}_{h}^{\mathrm{bg}}$ be a conforming, quasi-uniform and shape-regular partition of $\Omega_{h}^{\mathrm{bg}}$. We denote by $h_T$ the diameter of a cell $T \in \mathcal{T}_{h}^{\mathrm{bg}}$ and the characteristic mesh size is $h \doteq \max_{T \in \mathcal{T}_{h}^{\mathrm{bg}}} h_T$. Next, we introduce some geometrical definitions in order to define unfitted \ac{fe} discretisations.

{First, we let $\{\mathcal{T}_{h}, \mathcal{T}_{h}^{\mathrm{out}}\}$ denote a partition of $\mathcal{T}_{h}^{\mathrm{bg}}$ into cells intersecting $\Omega$ (active cells) and cells not intersecting $\Omega$ (outer cells), respectively. Since outer cells do not play any role in the discretisation, they are discarded. We further partition $\mathcal{T}_{h}$ into $\{\mathcal{T}_{h}^{\mathrm{in}}, \mathcal{T}_{h}^{\mathrm{cut}}\}$, where}
\begin{equation}
  {\mathcal{T}_{h}^{\mathrm{in}} \doteq \left\{ T \in \mathcal{T}_{h} : T \subset \Omega \right\}, \qquad \mathcal{T}_{h}^{\mathrm{cut}} \doteq \mathcal{T}_{h} \setminus \mathcal{T}_{h}^{\mathrm{in}}.}
\end{equation}
{The cells in $\mathcal{T}_{h}^{\mathrm{in}}$ are called \emph{interior} cells, while the cells in $\mathcal{T}_{h}^{\mathrm{cut}}$ are called \emph{cut} cells.}

We define the active domain $\Omega_h$ as the interior of the closure of $\bigcup_{T \in \mathcal{T}_{h}} T$ and denote by $\Omega_h^{\mathrm{in}}$ and $\Omega_h^{\mathrm{cut}}$ the unions of cells in $\mathcal{T}_{h}^{\mathrm{in}}$ and $\mathcal{T}_{h}^{\mathrm{cut}}$, respectively.

To complete the geometrical definitions, we extend the previous classifications to the \emph{facets}. Let $\mathcal{F}_{h}$ denote the set of all facets in the active mesh $\mathcal{T}_{h}$, where a \emph{facet} refers to 1-faces (edges) in 2D and 2-faces (faces) in 3D. We also define $\mathcal{F}_h^{\mathrm{cut}}$ as the set of interior facets of the active mesh that are faces of at least one cut cell in $\mathcal{T}_h^{\mathrm{cut}}$.

Based on this classification, we introduce an aggregated mesh $\mathcal{T}_{h}^{\mathrm{ag}}$, obtained by associating each cell in $\mathcal{T}_{h}^{\mathrm{cut}}$ with exactly one cell in $\mathcal{T}_{h}^{\mathrm{in}}$, called the \emph{root} cell, through a chain of facet-adjacent cells; see \cite{Badia2018-vw,LarZah23}. Each cell in $\mathcal{T}_{h}$ belongs to one, and only one, aggregate, while cells in $\mathcal{T}_{h}^{\mathrm{in}}$ that do not absorb cut cells define singleton aggregates. We denote by $\mathcal{T}_{h}^{\partial,\mathrm{ag}}$ the set of non-trivial boundary aggregates, i.e., the aggregates containing at least one cell in $\mathcal{T}_{h}^{\mathrm{cut}}$. The goal of aggregation is to obtain patches that intersect $\Omega$ with measure bounded away from zero. We assume that every aggregate has diameter uniformly bounded by $h$. This aggregate structure will be used in the proposed stabilisation methods below.

{For the numerical experiments in Section~\ref{sec:num-results}, we use a slightly more general definition of interior and cut cells, depending on a parameter $\delta \in (0,1]$:}
\begin{equation}
  {\mathcal{T}_{h}^{\mathrm{in}} \doteq \left\{ T \in \mathcal{T}_{h} : \frac{|T \cap \Omega|}{|T|} \geq \delta \right\}, \qquad \mathcal{T}_{h}^{\mathrm{cut}} \doteq \mathcal{T}_{h} \setminus \mathcal{T}_{h}^{\mathrm{in}}.}
\end{equation}
{The analysis presented in this work can readily be extended to this $\delta$-based classification.}

{Finally, we introduce a fixed smooth domain $\domext$ such that $\domact \subset \domext$ for all $0 < h \leq h_0$. This outer domain will be used in the stability and error analysis below.}

\section{Problem formulation and preliminaries}\label{sec:formulation}

As a model problem, we consider Darcy's system in $\Omega$. We consider a partition of the boundary $\Gamma = \partial \Omega$ into $\bouu$ and $\boup$, on which the flux and pressure traces are prescribed, respectively. The outward unit normal vector to the boundary is denoted by $n$. The problem reads as follows: find the flux $u : \Omega \to \mathbb{R}^d$ and the pressure $p : \Omega \to \mathbb{R}$ such that
\begin{align}\label{eq:darcy}
  {\eta} u + \nabla p = f \quad  \text{in} \ \Omega, \quad \dv{u} = -g \quad  \text{in} \ \Omega, \quad 
  u \cdot n = u_\Gamma \quad  \text{on} \ \Gamma_{u}, \quad 
  p = p_\Gamma \quad  \text{on} \ \Gamma_p,
\end{align}
where ${\eta}$ is the inverse permeability tensor, $f$ is the source term, $g$ is the divergence source term and $u_\Gamma$ and $p_\Gamma$ are the boundary traces for the normal flux and pressure, respectively.
When $\Gamma_p = \emptyset$, i.e., when pure flux boundary conditions are considered, the solvability constraint $\int_{\partial \Omega} {u}_\Gamma = {-}\int_\Omega g$ must be satisfied and the pressure is only defined up to a constant.

Let $H_{u_\Gamma}(\mathrm{div},\Omega)$ and $H_0(\mathrm{div},\Omega)$ be the spaces of functions in $H(\mathrm{div},\Omega)$ whose normal trace on $\Gamma_u$ is equal to $u_\Gamma$ and zero, respectively. Assume that $\eta \in L^\infty(\Omega)$ and $\eta \geq \eta_0 > 0$ a.e. in $\Omega$ for some constant $\eta_0 > 0$, $f \in [L^2(\Omega)]^d$, $g \in L^2(\Omega)$, $p_\Gamma \in H^{1/2}(\Gamma_p)$ and $u_\Gamma \in H^{-1/2}(\Gamma_u)$. The weak form of \eqref{eq:darcy} reads: find $u \in H_{u_\Gamma}(\mathrm{div},\Omega)$ and $p \in L^2(\Omega)$ such that
\begingroup
\mathtoolsset{showonlyrefs=false}
\begin{subequations}\label{eq:weak-form}
\begin{align}
  a(u, v) + b(v, p) &= ( f, v )_\Omega - \langle v \cdot n, p_\Gamma \rangle_{\Gamma_p}, & \forall v \in H_0(\mathrm{div},\Omega), \\
  b(u, q) &= ( g, q )_\Omega, & \forall q \in L^2(\Omega),
\end{align}
\end{subequations}
\endgroup
where 
\begin{align}
  a(u, v) \doteq (\eta u, v)_\Omega, \quad \text{ and } \quad b(v, p) \doteq - (\dv{v}, p)_\Omega. 
\end{align}
Here, $(\cdot,\cdot)_\omega$ denotes the $L^2$ inner product over $\omega$, and $\langle \cdot, \cdot \rangle_{\omega}$ denotes the duality pairing between $H^{-1/2}(\omega)$ and $H^{1/2}(\omega)$. When $\Gamma_p = \emptyset$, we seek $p \in L^2_0(\Omega)$, i.e., the space of functions in $L^2(\Omega)$ with zero mean in $\Omega$, test the second equation in \eqref{eq:weak-form} with $q \in L^2_0(\Omega)$, and assume that the compatibility condition $\int_{\partial \Omega} u_\Gamma = -\int_\Omega g$ on the data is satisfied.

\subsection{Discrete spaces}

Let $\Vh \subset H(\mathrm{div}, \Omega_h)$ and $\Qh \subset L^2(\Omega_h)$ be $H(\mathrm{div})$-conforming and $L^2$-conforming \ac{fe} spaces defined on $\mathcal{T}_{h}$, respectively. Let $\mathbb{P}_k$ denote the space of polynomials of degree at most $k$ and let $\mathbb{Q}_k$ denote the space of tensor-product polynomials of degree at most $k$ in each variable. We make the following assumption on these discrete spaces.
\begin{assumption}\label{as:discrete-spaces}
The global flux space $\Vh$ can be defined in terms of a space of vector fields $\V$ with polynomial components as follows:
\begin{equation}\label{ref:Vh}
  \Vh = \left\{ v_h \in H(\mathrm{div},\Omega_h) : v_h|_T \in \V(T), \forall T \in \mathcal{T}_h \right\},
\end{equation}
where $\V(T)$ denotes the restriction of $\V$ to $T \in \mathcal{T}_h$.
The pressure space $\Qh$ consists of discontinuous piecewise polynomials $\Q(T)$ on each $T \in \mathcal{T}_h$. These two spaces determine a discrete de Rham complex between $d-1$ and $d$-forms with bounded cochain interpolators $\Iv$ and $\Iq$~\cite{Arnold2019-hi}. 

Let $k_u, k_p \in \{0,1,\ldots\}$ be the largest values such that $[\mathbb{P}_{k_u}(T)]^d \subset \V(T)$ and $\mathbb{P}_{k_p}(T) \subset \Q(T)$ for all $T \in \mathcal{T}_h$, respectively.  We refer to $k_u$ and $k_p$ as the polynomial degrees of $\Vh$ and $\Qh$, respectively. We assume that the interpolators $\Iv$ and $\Iq$ satisfy the following approximation properties: given $u \in [H^{r}(\Omega_h)]^d$ and $p \in H^{t}(\Omega_h)$, it holds
\begin{align}
&\| u - \Iv(u) \|_{\dimnor{L^2(\Omega_h)}} \lesssim h^{r} \| u \|_{\dimnor{H^{r}(\Omega_h)}}, \quad \hbox{for } r = 0, \ldots, k_u+1,  \\ 
&\| p - \Iq(p) \|_{L^2(\Omega_h)} \lesssim h^{t} \| p \|_{H^{t}(\Omega_h)}, \quad \hbox{for } t = 0, \ldots, k_p+1.
\end{align}
\end{assumption}

This choice implies a discrete inf-sup condition, i.e., the restriction of $\mathrm{div}$ to $\Vh$ is surjective onto $\Qh$, which is necessary for the stability of the mixed formulation. In particular, for such a spatial discretisation, it holds that $\dv{\Vh} = \Qh$, due to the trivial exactness of the relevant portion of the discrete complex. Possible choices for these spaces can be found in~\cite{Arnold2019-hi} for simplicial and quadrilateral meshes. They include Raviart--Thomas and Brezzi--Douglas--Marini spaces, as well as the serendipity \acp{fe} in~\cite{Arnold2014-df}. The proposed methodology is agnostic to the mesh topology; in Section~\ref{sec:num-results}, we use both quadrilateral and triangular meshes. In particular, in the numerical experiments, we use $\mathbb{RT}_k \times \mathbb{Q}_{k}$ on quadrilateral meshes and $\mathbb{RT}_k \times \mathbb{P}_{k}$ and $\mathbb{BDM}_{k+1} \times \mathbb{P}_{k}$ on triangular meshes, for $k \geq 0$. Note that, since $\dv{\Vh} = \Qh$, $k_p \leq k_u$ and, given $\dv{u} \in H^t(\Omega_h)$, it holds that:
\begin{equation}
 \| \dv{u} - \Iq(\dv{u}) \|_{L^2(\Omega_h)} \lesssim h^{t} \| \dv{u} \|_{H^{t}(\Omega_h)}, \quad \hbox{for } t =  0, \ldots, k_p+1.
\end{equation} 

\subsection{Discrete formulation}

Next, we consider the discrete formulation of \eqref{eq:weak-form}. First, we weakly impose the flux boundary conditions since the boundary $\Gamma$ is embedded in $\Omega_h$. Second, we add stabilisation terms to the discrete problem to ensure stability and robustness of the solution with respect to small cut cells. In particular, we make the following assumption on the stabilisation terms. Later, we propose several stabilisation terms that satisfy this assumption. We use $a \lesssim b$ (respectively, $a \gtrsim b$) to denote that $a \leq C b$ (respectively $a \geq C b$) for some constant $C > 0$ independent of the characteristic mesh size $h$ and the cut location; we use $a \simeq b$ if both $a \lesssim b$ and $b \lesssim a$ hold. For simplicity of exposition, we take $\eta$ to be scalar and equal to one.

\begin{assumption}\label{as:stabilisation-terms}
  Let $\shuf$ and $\shpf$ be the stabilisation terms for the flux and pressure, respectively. The stabilisation terms satisfy the following properties:
  \begin{subequations}
  \begin{align}
    \| w_h \|^2_{\dimnor{L^2(\Omega)}} + \shuf(w_h,w_h) &\simeq \| w_h \|_{\dimnor{L^2(\Omega_h)}}^2,  & \forall w_h \in \Vh, \label{eq:stab-assump-1} \\
    \| r_h \|^2_{L^2(\Omega)} + \shpf(r_h,r_h) &\simeq \| r_h \|_{L^2(\Omega_h)}^2,  & \forall r_h \in \Qh. \label{eq:stab-assump-2}
  \end{align}
  \end{subequations}
  Furthermore, the stabilisation terms satisfy the following weak consistency estimates: 
  \begin{subequations}\label{eq:weak-consistency}
  \begin{align}
    \shuf(\Iv(w),v_h) &\lesssim h^{s}\| w \|_{\dimnor{H^{s}(\domext)}} \| v_h \|_{\dimnor{L^2(\Omega_h)}}, & 0 \leq s \leq k_u+1, \label{eq:weak-consistency-1} \\
    \shpf(\Iq(r),q_h) &\lesssim h^{t} \| r \|_{H^{t}(\domext)} \| q_h \|_{L^2(\Omega_h)}, & 0 \leq t \leq k_p+1, \label{eq:weak-consistency-2}
  \end{align}
  \end{subequations}
  for any $w \in [H^{s}(\domext)]^d$, $r \in H^{t}(\domext)$, $v_h \in \Vh$ and $q_h \in \Qh$. 
  In addition, the pressure stabilisation vanishes on constants, i.e., $\shpf(c,q_h)=0$, for every constant $c$ and every $q_h \in \Qh$.
\end{assumption}
{Note that, taking $s=0$ in \eqref{eq:weak-consistency-1} and $t=0$ in \eqref{eq:weak-consistency-2}, and using that the interpolators act as the identity on the discrete spaces, the weak consistency implies continuity of the stabilisation terms in the $L^2(\Omega_h)$-norm in the discrete spaces.}

In order to state the weak form of \eqref{eq:darcy} with weak imposition of flux boundary conditions, we define the discrete forms:
\begin{align}
   \ahf(u_h,v_h) & \doteq a(u_h,v_h) + \gamma h^{-1} (u_h \cdot n, v_h \cdot n)_{\bouu} + \xivh \shuf(u_h,v_h), \\
   \bbf_h(v_h,p_h) & \doteq b(v_h,p_h) - \xiqh \shpf(\dv{v_h},p_h), \label{eq:b}\\
  \tilde\bbf_h(v_h,p_h) &\doteq \tilde b(v_h,p_h) - \xiqh \shpf(\dv{v_h},p_h),
\end{align}
where $\tilde b(v_h,p_h) \doteq b(v_h,p_h) + (v_h \cdot n ,p_h)_{\Gamma_{u}}$, and $\gamma, \xivh, \xiqh > 0$ are positive numerical parameters. The discrete formulation reads: find $(u_h,p_h) \in \Vh \times \Qh$ such that
\begingroup
\mathtoolsset{showonlyrefs=false}
  \begin{subequations}\label{eq:discrete-problem}
    \begin{align}
      a_h(u_h,v_h) + \tilde b_h(v_h,p_h) & = (f,v_h)_{\Omega} + \gamma h^{-1} (u_\Gamma,v_h \cdot n )_{\bouu} - (v_h \cdot n,p_\Gamma)_{\boup}, \label{eq:discrete-problem-1} \\
      b_h(u_h,q_h) & = (g,q_h)_{\Omega}, \label{eq:discrete-problem-2}
    \end{align}
    \end{subequations}
\endgroup
for all $(v_h,q_h) \in \Vh \times \Qh$. In the case $\Gamma_p = \emptyset$, the pressure trial and test spaces are replaced by $\Qh \cap L^2_0(\Omega)$.

Let us define the following stabilised $L^2$-projection $\pi_{h}^{0}: L^2(\Omega) \rightarrow \Qh$: given $r \in L^2(\Omega)$, find $\pi_{h}^{0}(r) \in \Qh$ such that
\begin{align}\label{eq:stabilised-projection}
( \pi_{h}^{0}(r),q_h)_{\Omega} + \xiqh \shpf(\pi_{h}^{0}(r),q_h) = (r,q_h)_{\Omega}, \quad \forall q_h \in \Qh.
\end{align}
Next, using this stabilised $L^2$-projection we show that the discrete formulation satisfies the following property. 
\begin{lemma}[Discrete mass conservation]\label{lem:mass-conservation}
If $\Gamma_p \neq \emptyset$, the discrete solution $u_h$ satisfies $\dv{u_h} = -\piqst(g)$ on $\Omega_h$. {If $\Gamma_p = \emptyset$, there exists a constant $c_h \in \mathbb{R}$ such that
$\dv{u_h} = -\piqst(g) + c_h$ on $\Omega_h$. For $g=0$, the discrete flux divergence  is zero for $\Gamma_p \neq\emptyset$ and constant otherwise.}
\end{lemma}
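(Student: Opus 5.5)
The argument is algebraic: rewrite the discrete mass equation \eqref{eq:discrete-problem-2} so that it has exactly the form of the defining relation \eqref{eq:stabilised-projection} of $\piqst$, and then use $\dv{\Vh} = \Qh$ to pass from a variational identity to a pointwise one.

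\textbf{Case $\Gamma_p \neq \emptyset$.} Since $\Vh \subset H(\mathrm{div},\Omega_h)$ and $\dv{\Vh} = \Qh$, we have $w_h \doteq -\dv{u_h} \in \Qh$. By \eqref{eq:b}, equation \eqref{eq:discrete-problem-2} reads
\begin{equation}
(w_h, q_h)_{\Omega} + \xiqh \shpf(w_h, q_h) = (g, q_h)_{\Omega}, \qquad \forall q_h \in \Qh .
\end{equation}
This is exactly \eqref{eq:stabilised-projection} with $r = g$.

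To conclude $w_h = \piqst(g)$, I need the projection to be well defined. The bilinear form $(\cdot,\cdot)_\Omega + \xiqh\shpf(\cdot,\cdot)$ is symmetric, and I will assume $\shpf$ is symmetric positive semidefinite, as it is for the ghost-penalty forms considered later. By \eqref{eq:stab-assump-2} it is then coercive on $\Qh$ with respect to $\|\cdot\|_{L^2(\Omega_h)}$ (constants depending on $\xiqh$). Indeed, $\|r_h\|^2_{L^2(\Omega)} + \xiqh \shpf(r_h,r_h) \ge \min(1,\xiqh)\big(\|r_h\|^2_{L^2(\Omega)} + \shpf(r_h,r_h)\big)$. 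Hence the linear problem \eqref{eq:stabilised-projection} has a unique solution in the finite-dimensional space $\Qh$, so $w_h = \piqst(g)$, i.e.\ $\dv{u_h} = -\piqst(g)$ as elements of $\Qh$. This means equality on all of $\Omega_h$, not only on $\Omega$.

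\textbf{Case $\Gamma_p = \emptyset$.} Now the test space is $\Qh \cap L^2_0(\Omega)$, so the identity above holds only for mean-zero $q_h$. Define the residual functional $\ell(q_h) \doteq (g + \dv{u_h}, q_h)_\Omega - \xiqh\shpf(\dv{u_h}, q_h)$ on $\Qh$; it vanishes on $\Qh \cap L^2_0(\Omega)$.

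Since the constant $1$ lies in $\Qh$ and $\Qh = (\Qh\cap L^2_0(\Omega)) \oplus \mathrm{span}\{1\}$, we get $\ell(q_h) = \lambda\,(1,q_h)_\Omega$ for some $\lambda\in\mathbb{R}$. Because $\shpf(c,\cdot)=0$ for constants, and $\shpf$ is symmetric, $(1,q_h)_\Omega = (1,q_h)_\Omega + \xiqh\shpf(1,q_h)$. Writing $w_h = -\dv{u_h}$ as before, this gives
\begin{equation}
(w_h + \lambda, q_h)_\Omega + \xiqh\shpf(w_h + \lambda, q_h) = (g,q_h)_\Omega, \qquad \forall q_h \in \Qh .
\end{equation}
By uniqueness, $w_h + \lambda = \piqst(g)$, so $\dv{u_h} = -\piqst(g) + c_h$ with $c_h = \lambda$.

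\textbf{Case $g = 0$.} Here $\piqst(0) = 0$ by uniqueness, so $\dv{u_h}$ is zero when $\Gamma_p\ne\emptyset$ and constant otherwise.

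The only delicate point is the well-posedness of $\piqst$, namely coercivity on all of $\Omega_h$ despite small cut cells. This comes from \eqref{eq:stab-assump-2}, together with the symmetry and nonnegativity of $\shpf$, which I take as implicit in the assumption. A second point is that the zero-mean case needs $1 \in \Qh$ and $\shpf(1,\cdot)=0$; the latter is exactly the constant-annihilating property in Assumption~\ref{as:stabilisation-terms}.
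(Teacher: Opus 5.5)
Your proof is correct and is essentially the paper's argument. The case $\Gamma_p \neq \emptyset$ is identical; you additionally justify that $\piqst$ is well defined, which the paper takes for granted.

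For $\Gamma_p=\emptyset$ the two routes differ slightly. The paper splits $\dv{u_h}+\piqst(g)=z_h+c_h$ with $z_h$ of zero mean and tests the difference equation with $q_h=z_h$. You instead extend the zero-mean identity to all of $\Qh$ through a multiplier $\lambda$ and reuse uniqueness of $\piqst$. The two are equivalent, and yours has the small bonus of identifying $c_h$ as that multiplier.

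There is one sign slip. With $b_h$ as in \eqref{eq:b}, the residual that vanishes on $\Qh\cap L^2_0(\Omega)$ is $\ell(q_h)=(g+\dv{u_h},q_h)_\Omega+\xiqh\shpf(\dv{u_h},q_h)$. With your minus sign it does not vanish, although the identity you display next is exactly the one the corrected $\ell$ yields.

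Finally, symmetry of $\shpf$ is never actually used. You only need $\shpf(c,\cdot)=0$, with the constant in the first argument, which is what the assumption states, together with $\shpf(r_h,r_h)\ge 0$.
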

\begin{proof}
  If $\Gamma_p \neq \emptyset$, the claim follows directly from \eqref{eq:discrete-problem-2} and the definition of the projection in \eqref{eq:stabilised-projection}. 
  {Assume now that $\Gamma_p=\emptyset$. Then \eqref{eq:discrete-problem-2} holds for all $q_h \in \Qh \cap L^2_0(\Omega)$. Combining \eqref{eq:discrete-problem-2} with \eqref{eq:stabilised-projection}, we obtain}
  \begin{equation*}
    (\dv{u_h}+\pi_h^0(g),q_h)_\Omega + \xiqh \shpf(\dv{u_h}+\pi_h^0(g),q_h) = 0, \quad \forall q_h \in \Qh \cap L^2_0(\Omega).
  \end{equation*}
  {Since $\dv{u_h}+\pi_h^0(g) \in \Qh$ by Assumption~\ref{as:discrete-spaces}, we write $\dv{u_h}+\pi_h^0(g)=z_h+c_h$ with $z_h \in \Qh \cap L^2_0(\Omega)$ and $c_h$ constant. Taking $q_h=z_h$ in the previous identity and using that $(c_h,z_h)_\Omega=0$ together with $\shpf(c_h,z_h)=0$ by Assumption~\ref{as:stabilisation-terms}, we obtain}
  \begin{equation*}
    \|z_h\|_{L^2(\Omega)}^2 + \xiqh \shpf(z_h,z_h)=0.
  \end{equation*}
  {Hence, $z_h=0$, and therefore $\dv{u_h} = -\pi_h^0(g) + c_h$ on $\Omega_h$. The results for $g=0$ relies on the fact that $\piqst(0) = 0$.}
\end{proof}
\begin{remark}
  {The formulation (\ref{eq:discrete-problem}) is deliberately non-symmetric. Adding the term $(u_h \cdot n, q_h)_{\Gamma_u}$ to the mass equation would destroy the mass conservation and the property $\dv{u_h} = -\pi_h^0(g)$ in Lemma~\ref{lem:mass-conservation}.}
\end{remark}

\begin{remark}
{In the case $\Gamma_p=\emptyset$, the pressure space in the discrete formulation is replaced by $\Qh \cap L^2_0(\Omega)$. In general one does not have $\dv{\Vh}\subset \Qh \cap L^2_0(\Omega)$, since the flux boundary condition is imposed weakly and therefore the mean value of $\dv{v_h}$ is not constrained for $v_h\in\Vh$. As a consequence, the discrete conservation equation only determines the zero-mean component of $\dv{u_h}$, while its constant mode remains undetermined. {To address this issue, one can impose the additional constraint $\int_{\partial \Omega} u_h \cdot n = - \int_\Omega g$ via a Lagrange multiplier in \eqref{eq:discrete-problem}, so that the previous lemma holds with $c_h = 0$ when $\Gamma_p = \emptyset$. We have used this technique in the numerical experiments in Section~\ref{sec:num-results}. The analysis below can readily be extended to this corrected formulation, but it is not included here for conciseness.}
}
\end{remark}
\begin{remark}
  {The constant $c_h$ converges to zero as $h \to 0$. Indeed, since $\dv{u}=-g$ in $\Omega$ and the stabilised projection preserves constants, one has}
  \begin{equation*}
    |\Omega|^{1/2}|c_h| = \|\dv{u_h}+\pi_h^0(g)\|_{L^2(\Omega)} \leq \|\dv{u}-\dv{u_h}\|_{L^2(\Omega)} + \|g-\pi_h^0(g)\|_{L^2(\Omega)}.
  \end{equation*}
  {The a priori estimate in Section~\ref{sec:error-estimates} together with Lemma~\ref{lem:stabilised-projection-alt} imply that $c_h \to 0$ as $h \to 0$.}
\end{remark}

\section{Stability analysis}\label{sec:stability}
In this section, we prove the stability of the discrete problem \eqref{eq:discrete-problem}. Let us write the system in compact form as $\tilde A_h((u_h,p_h),(v_h,q_h)) = \ell_h(v_h,q_h)$, where
\begin{align}\label{eq:discrete-problem-operator}
  \tilde A_h((u_h,p_h),(v_h,q_h))  &\doteq a_h(u_h,v_h) + \tilde b_h(v_h,p_h) + b_h(u_h,q_h),\\
  \ell_h(v_h,q_h) &\doteq (f,v_h)_{\Omega} + \gamma h^{-1} (u_\Gamma,v_h \cdot n )_{\bouu} - (v_h \cdot n,p_\Gamma)_{\boup} + (g,q_h)_{\Omega}.
\end{align}

We note that the analysis does not rely on a continuous inf-sup condition whose constant is uniform with respect to $h$ over the family of active domains $\domact$. Indeed, $\domact$ is a family of $h$-dependent domains that may fail to satisfy the assumptions required at the continuous level to obtain an inf-sup constant bounded away from zero as $h \downarrow 0$.
Instead, we consider the discrete multilinear form 
\begin{equation}
  A_h((u_h,p_h),(v_h,q_h)) \doteq a_h(u_h,v_h) + b_h(v_h,p_h) + b_h(u_h,q_h)
\end{equation}
without the boundary term on $\Gamma_{u}$ and prove a discrete weak inf-sup condition in $\domact$. We then rely on this result to analyse the full multilinear form $\tilde A_h$, which contains the boundary term $(v_h \cdot n, p_h)_{\Gamma_{u}}$.

We define the norms: 
\begin{align*}
  \| u \|^2_{\mathrm{d},h} &\doteq  \|u\|^2_{\dimnor{L^2(\domact)}} + \|\dv{u}\|^2_{L^2(\domact)} +\gamma  h^{-1} \|u \cdot n\|^2_{L^2(\bouu)},  
   \\ 
   \| p \|^2_{0,h} &\doteq \| p \|^2_{L^2(\domact)} + h \|p \|_{L^2(\bouu)}^2,
\end{align*}
and note that 
\begin{equation}\label{eq:norm-equivalence-pressure}
  \|p_h\|_{L^2(\Omega_h)} \le \|p_h\|_{0,h} \lesssim \|p_h\|_{L^2(\Omega_h)}, \qquad \forall p_h \in \Qh.
\end{equation}
The lower bound is immediate from the definition. For the converse, we use the trace inequality on cut elements
\begin{equation}\label{eq:cut-trace}
  h\|\phi\|_{L^2(\partial \Omega \cap T)}^2 \lesssim \|\phi\|_{L^2(T)}^2 + h^2\|\nabla \phi\|_{L^2(T)}^2, \qquad \forall T \in \mathcal{T}_{h}^{\mathrm{cut}},
\end{equation}
which holds for every $\phi \in H^1(T)$; see \cite{acta25}. Since $\Gamma_u$ is covered by cut elements, \eqref{eq:cut-trace} followed by a standard inverse inequality~\cite{BrennerScott} gives the upper bound in \eqref{eq:norm-equivalence-pressure}.
\begin{theorem}[Stability]\label{thm:stability}
  Assuming that the domain $\Omega$ enjoys elliptic regularity, the multilinear form $\tilde A_h$ in \eqref{eq:discrete-problem-operator} satisfies the discrete inf-sup condition
    \begin{align}
      \inf_{(u_h,p_h) \in \Vh \times \Qh} \sup_{(v_h,q_h) \in \Vh \times \Qh} \frac{\tilde A_h((u_h,p_h),(v_h,q_h))}{(\|u_h\|^2_{\mathrm{d},h} + \|p_h\|^2_{0,h})^{1/2} (\|v_h\|^2_{\mathrm{d},h} + \|q_h\|^2_{0,h})^{1/2}} \geq \tilde \beta, 
    \end{align} 
   where, in the case $\Gamma_p=\emptyset$, the pressure spaces in the infimum and supremum are replaced by $\Qh \cap L^2_0(\Omega)$. The constant $\tilde \beta > 0$ is independent of $h$ and the cut locations, provided $\gamma$ is large enough and $h$ is small enough.
\end{theorem}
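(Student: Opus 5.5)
We prove the inf-sup bound by building, for a given pair $(u_h,p_h)$, one test pair $(v_h,q_h)$ as a linear combination of three pieces. Each piece controls part of the norm $\|u_h\|_{\mathrm{d},h}^2+\|p_h\|_{0,h}^2$. The test pair is also bounded in norm by the trial pair. The case $\Gamma_p=\emptyset$ is handled identically, with zero-mean pressures throughout.

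\textbf{Step 1 (diagonal test).} Take $(v_h,q_h)=(u_h,p_h)$. The antisymmetric $b_h$ terms cancel, leaving
\begin{equation*}
  \tilde A_h((u_h,p_h),(u_h,p_h)) = \|u_h\|^2_{L^2(\Omega)} + \xivh\shuf(u_h,u_h) + \gamma h^{-1}\|u_h\cdot n\|^2_{L^2(\bouu)} + (u_h\cdot n,p_h)_{\bouu}.
\end{equation*}
By \eqref{eq:stab-assump-1}, the first two terms are $\gtrsim\|u_h\|^2_{L^2(\Omega_h)}$. We bound the boundary term by Young's inequality,
\begin{equation*}
  |(u_h\cdot n,p_h)_{\bouu}| \le \tfrac{\gamma}{2}h^{-1}\|u_h\cdot n\|^2_{L^2(\bouu)} + \tfrac{1}{2\gamma} h\|p_h\|^2_{L^2(\bouu)}.
\end{equation*}
By \eqref{eq:norm-equivalence-pressure}, the last term is $\lesssim \gamma^{-1}\|p_h\|^2_{L^2(\Omega_h)}$. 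This term is absorbed later once $\gamma$ is large.

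\textbf{Step 2 (divergence control).} Take $q_h=-\dv{u_h}\in\Qh$, which is admissible since $\dv{\Vh}=\Qh$. Then
\begin{equation*}
  b_h(u_h,-\dv{u_h}) = \|\dv{u_h}\|^2_{L^2(\Omega)}+\xiqh\shpf(\dv{u_h},\dv{u_h}) \gtrsim \|\dv{u_h}\|^2_{L^2(\Omega_h)}
\end{equation*}
by \eqref{eq:stab-assump-2}. This is the structural reason for the mixed stabilisation in $b_h$.

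\textbf{Step 3 (pressure control).} This is the main obstacle. We must find $v_h$ with $b_h(v_h,p_h)\gtrsim\|p_h\|^2_{L^2(\Omega_h)}$ and $\|v_h\|_{\mathrm{d},h}\lesssim\|p_h\|_{L^2(\Omega_h)}$, including the $h^{-1/2}$-weighted normal trace on $\Gamma_u$. As the introduction warns, $v_h$ cannot be required to vanish on $\Gamma_u$, so the boundary penalty must be controlled directly.

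First, extend $p_h$ by zero to $\domext$ (smooth), or solve on a smooth domain containing $\Omega_h$. Let $\phi$ solve $-\Delta\phi=p_h$ there with homogeneous Dirichlet data; in the case $\Gamma_p=\emptyset$, use Neumann data instead. Elliptic regularity gives $\|\phi\|_{H^2}\lesssim\|p_h\|_{L^2(\Omega_h)}$.

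Next, set $w=\nabla\phi\in[H^1]^d$ and $v_h=\Iv(w)$. Commutation gives $\dv{v_h}=\Iq(\dv w)=-\Iq(p_h)=-p_h$ on $\Omega_h$. Hence
\begin{equation*}
  b_h(v_h,p_h) = \|p_h\|^2_{L^2(\Omega)} + \xiqh\shpf(p_h,p_h) \gtrsim \|p_h\|^2_{L^2(\Omega_h)}.
\end{equation*}
Boundedness of $\Iv$ gives $\|v_h\|_{L^2(\Omega_h)}\lesssim\|w\|_{H^1}$.

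For the trace, use the cut trace inequality \eqref{eq:cut-trace} on $v_h-w$ together with $\|w\cdot n\|_{L^2(\Gamma)}\lesssim\|w\|_{H^1}$. This yields
\begin{equation*}
  h^{-1}\|v_h\cdot n\|^2_{L^2(\bouu)} \lesssim h^{-1}\|w\|^2_{H^1} + h^{-2}\|v_h-w\|^2_{L^2} + \|\nabla(v_h-w)\|^2 .
\end{equation*}
The first term is bad, with an $h^{-1}$ factor. The fix I would try first is to scale instead: test with $v_h$ multiplied by a factor $\sqrt{h}$, or rather use the pair $\epsilon v_h$. The cross term $\gamma h^{-1}(u_h\cdot n,v_h\cdot n)_{\bouu}$ is then bounded by Young against the Step 1 control of $\gamma h^{-1}\|u_h\cdot n\|^2$. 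What remains is the $\tilde b$ boundary term $(v_h\cdot n,p_h)_{\bouu}$ and the norm of $v_h$ in the $\|\cdot\|_{\mathrm{d},h}$ norm. For the latter, I accept $\|v_h\|_{\mathrm{d},h}\lesssim \gamma^{1/2}h^{-1/2}\|p_h\|$ only if necessary.

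Preferably, I would choose $\phi$ with $\nabla\phi\cdot n=0$ on $\Gamma_u$, posing a mixed problem on $\Omega$ with $\phi=0$ on $\Gamma_p$, and then extend it. Then $w\cdot n=0$ on $\Gamma_u$, and only the interpolation error remains. For that, $h^{-1}\|(v_h-w)\cdot n\|^2_{L^2(\bouu)}\lesssim h^{-2}\|v_h-w\|^2+\|\nabla(v_h-w)\|^2\lesssim\|w\|^2_{H^1}$, by approximation with $r=1$ and an $H^1$-stability of $\Iv$ on $H^1$ fields. This is where "$\Omega$ enjoys elliptic regularity" is used, via a Sobolev extension of $\phi$ to $\domext$. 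The boundary term $(v_h\cdot n,p_h)_{\bouu}$ is then bounded by $h^{-1/2}\|v_h\cdot n\|\, h^{1/2}\|p_h\|\lesssim\|p_h\|^2_{L^2(\Omega_h)}\cdot C$ with a small constant only if $h$ is small. Here I would exploit an $h^{1/2}$ gain from the interpolation error to absorb it, which explains the "$h$ small enough" hypothesis. The remaining terms $a_h(u_h,v_h)$ and $\xivh\shuf$ are bounded by Cauchy--Schwarz and continuity, then Young's inequality.

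\textbf{Step 4 (combination).} Test with $(u_h+\delta_1 v_h,\;p_h-\delta_2\dv{u_h})$, taking $\delta_1,\delta_2$ small. Sum the three lower bounds and absorb the cross terms by Young's inequality. The $\gamma^{-1}\|p_h\|^2$ term from Step 1 is absorbed by choosing $\gamma$ large relative to $\delta_1$. Finally, observe that $\|v_h\|_{\mathrm{d},h}+\|q_h\|_{0,h}\lesssim\|u_h\|_{\mathrm{d},h}+\|p_h\|_{0,h}$, which gives $\tilde\beta$ independent of $h$ and the cut location.
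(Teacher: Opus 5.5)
\textbf{Step 3 is incomplete: your pressure test field misses the paper's key construction.} You need one field $v\in[H^1(\domext)]^d$ with two properties at once. (i) $\dv{v}=-p_h$ on all of $\domact$, so that commutation gives $\dv{\Iv(v)}=-p_h$ on every cell, cut cells included. (ii) $v\cdot n=0$ on $\bouu$, so that $h^{-1}\|\Iv(v)\cdot n\|^2_{L^2(\bouu)}$ only sees the interpolation error.

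Your first construction (a Poisson problem on a domain containing $\domact$) gives (i) but not (ii). Rescaling cannot fix this, because the inf-sup quotient is invariant under scaling of $v_h$, and the bad trace term leaves a quotient of order $h^{1/2}$.

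Your preferred construction (a mixed problem on $\Omega$, then a Sobolev extension of the potential) gives (ii) but silently loses (i). Outside $\Omega$ you have $\dv{\nabla\tilde\phi}=\Delta\tilde\phi\neq-p_h$. So on a cut cell $T$, $\dv{\Iv(w)}|_T=\Iq(\dv{w})|_T$ is the projection of a function that equals $-p_h$ only on $T\cap\Omega$. The resulting defect in $b_h(v_h,p_h)$ is bounded only by $\big(\|\Delta\tilde\phi\|_{L^2(\domact\setminus\Omega)}+\|p_h\|_{L^2(\domact\setminus\Omega)}\big)\|p_h\|_{L^2(\domact)}$. This is as large as the quantity you want to control, since on a small cut cell $\|p_h\|_{L^2(T\setminus\Omega)}\approx\|p_h\|_{L^2(T)}$, so nothing absorbs it.

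The missing idea is to extend the \emph{vector field}, not the potential. The paper keeps $v=\nabla\xi$ in $\Omega$. In $S=\domext\setminus\Omega$ it solves a Bogovskii-type divergence problem (Galdi, Thm.~III.3.1) with $\dv{v_S}=-\chi_{\domact}p_h$ and $v_S=\nabla\xi$ on $\partial\Omega$. Since nothing is prescribed on $\partial S\setminus\partial\Omega$, there is no compatibility condition. The glued field satisfies (i), (ii) and $\|v\|_{H^1(\domext)}\lesssim\|p_h\|_{L^2(\domact)}$.

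\textbf{The boundary term $(v_h\cdot n,p_h)_{\bouu}$ from $\tilde b_h$ has no hidden power of $h$.} With $v\in H^1$ only, the cut trace inequality gives $h^{-1/2}\|(v-v_h)\cdot n\|_{L^2(\bouu)}\lesssim\|v\|_{H^1(\domext)}$, while $h^{1/2}\|p_h\|_{L^2(\bouu)}\lesssim\|p_h\|_{L^2(\domact)}$. The product is therefore $O(1)\,\|p_h\|^2_{L^2(\domact)}$. After multiplication by $\delta_1$ it competes on equal terms with the gain $\delta_1\|p_h\|^2$, and taking $h$ small does not help.

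The paper handles this term differently. It proves the inf-sup for $A_h$ (without this term) by Babu\v{s}ka--Brezzi. It then treats the term as a perturbation bounded by $\alpha\gamma h^{-1}\|v_h\cdot n\|^2_{L^2(\bouu)}+(4\alpha\gamma)^{-1}h\|p_h\|^2_{L^2(\bouu)}$, where the first piece is part of $\|v_h\|^2_{\mathrm{d},h}$.

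Your Step~4 bookkeeping also does not close as written. For your pressure test function, $\gamma h^{-1}\|v_h\cdot n\|^2_{L^2(\bouu)}$ is only $O(\gamma)\|p_h\|^2$. Hence the cross term $\delta_1\gamma h^{-1}(u_h\cdot n,v_h\cdot n)_{\bouu}$ forces $\delta_1\lesssim\gamma^{-1}$. Then the pressure control $\delta_1\|p_h\|^2$ and the Step~1 remainder $\gamma^{-1}\|p_h\|^2$ scale identically, so ``$\gamma$ large relative to $\delta_1$'' is not available. The same factor $1+\gamma$ is hidden in the paper's bound $\|v_h\|^2_{\mathrm{d},h}\lesssim\|p_h\|^2$, and hence in its $\beta$. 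Whichever route you take, you must track the $\gamma$-dependence of the constants before fixing $\gamma$.

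\textbf{Smaller points.}
\begin{itemize}
\item In Steps~1 and~4 you must test with $q_h=-p_h$; with $q_h=p_h$ the two $b_h$ terms add rather than cancel.
\item For $\Gamma_p=\emptyset$, the choice $q_h=-\dv{u_h}$ is not admissible in $\Qh\cap L^2_0(\Omega)$. Remove its mean and control the constant via $|\Omega|\,c=\int_{\bouu}u_h\cdot n$, as the paper does in its kernel-coercivity step.
\end{itemize}
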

\begin{proof}
The proof proceeds in two steps.  First, we establish a discrete inf-sup condition for the bilinear form $b_h$ appearing in \eqref{eq:b}. Using this result together with coercivity of $a_h$, we can show an inf-sup condition for the reduced multilinear form $A_h$. In the second step, we estimate the boundary contribution $(v_h \cdot n, p_h)_{\Gamma_{u}}$ in terms of the mesh-dependent norms of $v_h$ and $p_h$ and show that, for $\gamma$ sufficiently large, stability of the full multilinear form $\tilde A_h$ follows.

We begin by constructing, for a given $p_h \in \Qh$ (with $p_h \in \Qh \cap L^2_0(\Omega)$ when $\Gamma_p=\emptyset$), a function $v_h \in \Vh$ that guarantees the inf-sup condition for the bilinear form $b_h$. Let $\xi \in H^2(\Omega)$ solve
\begin{align}\label{eq:neumann-mixed}
-\Delta \xi &= p_h \quad \text{in } \Omega, \qquad \partial_n \xi = 0 \quad \text{on } \bouu, \qquad \xi = 0 \quad \text{on } \boup.\end{align}
Note that if $\Gamma_u = \partial \Omega$, we have $\int_\Omega p_h = 0$ and $\xi \in H^2(\Omega) \cap L^2_0(\Omega)$. By elliptic regularity, it holds that
$
\|\xi\|_{H^2(\Omega)} \lesssim \|p_h\|_{\Omega}.
$ 
Next, defining $v_\Omega \doteq \nabla \xi \in [H^1(\Omega)]^d$, we obtain
\begin{equation}
\|v_\Omega\|_{\dimnor{H^1(\Omega)}} \lesssim \|p_h\|_{\Omega}, \qquad \dv{v_\Omega}=-p_h \quad \text{in } \Omega.
\end{equation} 

{Next, let $S = \Omega^{\mathrm{ext}} \setminus \Omega$ and $\chi_{\domact}$ be the characteristic function of $\domact$.} By~\cite[Theorem~III.3.1]{Galdi} and the trace extension theorem, one can show that there exists $v_S \in [H^1(S)]^d$ such that
\begin{align}
\dv{v_S} = - \chi_{\Omega_h} p_h \quad \text{in } S, \qquad 
v_S = v_\Omega \quad \text{on } \partial \Omega,
\end{align}
and the stability estimate
\begin{equation}
\|v_S\|_{\dimnor{H^1(S)}} \lesssim \|\chi_{\Omega_h}p_h\|_{L^2(S)}+\|v_\Omega\|_{\dimnor{H^{1/2}(\partial\Omega)}}
\end{equation}
holds. Note that there are no conditions imposed on $\partial S \setminus \partial \Omega$ and hence there are no compatibility conditions. We can now define $v \in [H^1(\Omega^{\mathrm{ext}})]^d$ such that 
\begin{equation}
v =
v_\Omega  \quad \text{in } \Omega,\qquad v =
v_S  \quad \text{in } S.
\end{equation}
Then
\begin{equation}
\dv v = - \chi_{\Omega_h} p_h \quad \text{in } \,   \domact \subset \Omega^{\mathrm{ext}}=\Omega \cup S, \qquad
v \cdot n = 0 \quad \text{on } \, \Gamma_u,
\end{equation}
and the following estimate holds by the stability estimate for $v_S$, elliptic regularity for $v_\Omega$ and the trace theorem on $\partial \Omega$:
\begin{equation}
\|v\|_{\dimnor{H^1(\Omega^{\mathrm{ext}})}} \lesssim  \|p_h\|_{L^2(\Omega)}+ \|\chi_{\Omega_h}p_h\|_{L^2(S)} +\|v_\Omega\|_{\dimnor{H^1(\Omega)}} \lesssim \|p_h\|_{L^2(\domact)}.
\end{equation}
Taking $v_h \doteq \Iv(v)$ and invoking the commutativity properties of the interpolator in Assumption~\ref{as:discrete-spaces}, we have that $v_h$ satisfies $\dv{v_h} = -p_h$ on $\domact$ and $\|v_h\|_{H(\mathrm{div},\Omega_h)} \lesssim \|p_h\|_{L^2(\domact)}$. Thus, we obtain:
\begin{align}
  \Bh(v_{h},p_h) = -(\dv{v_{h}},p_h)_{\dom} - \xiqh \shpf (\dv{v_{h}},p_h) =
  \| p_h \|^2_{L^2(\Omega)} + \xiqh \shpf(p_h,p_h) \gtrsim \| p_h \|^2_{L^2(\domact)},
\end{align}
where we have used \eqref{eq:stab-assump-2} in Assumption~\ref{as:stabilisation-terms}. By \eqref{eq:norm-equivalence-pressure}, the norms $\| p_h \|^2_{\domact}$ and $\|p_h\|^2_{0,h}$ are equivalent, and
 \begin{align}
    \| v_h \|^2_{\mathrm{d},h} & =  \|v_h\|^2_{H(\mathrm{div},\domact)} +\gamma  h^{-1} \|v_h \cdot n\|^2_{L^2(\bouu)} \\
    &  \lesssim \|p_h\|_{L^2(\domact)}^2+\gamma  h^{-1} \|(v-v_h) \cdot n\|^2_{L^2(\bouu)} \lesssim \|p_h\|_{L^2(\domact)}^2,  
\end{align}
where in the last inequality we used the properties of $v$, the trace inequality \eqref{eq:cut-trace} and the local interpolation estimate for $\Iv$ to obtain
\begin{align*}
h^{-1}  \|(v-v_h) \cdot n\|^2_{L^2(\bouu)}&\lesssim \sum_{T \in \mathcal{T}_{h}^{\mathrm{cut}} }  h^{-2} \|(v-v_h)\|^2_{L^2(T)}+ \|\nabla (v-v_h)\|^2_{\dimnor{L^2(T)}} \\
&  \lesssim \sum_{T \in \mathcal{T}_{h}} \|v\|_{H^1(T)}^2  \lesssim \|p_h\|_{L^2(\domact)}^2. 
\end{align*}
Next, we prove the stability of $a_h$ on $\mathrm{ker}(\Bh)$. We have, for any $u_h \in \Vh$:
\begin{align}\label{eq:stability-ah-basic}
  a_h(u_h,u_h)
  &= (u_h,u_h)_{\Omega} + \xivh \shuf(u_h,u_h) + \gamma  h^{-1} \|u_h \cdot n\|^2_{L^2(\bouu)} \\
  &\simeq  \|u_h\|_{\dimnor{L^2(\Omega_h)}}^2 + \gamma  h^{-1} \|u_h \cdot n\|^2_{L^2(\bouu)},
\end{align}
where we have used \eqref{eq:stab-assump-1} in Assumption~\ref{as:stabilisation-terms}. Next, we note that if $u_h \in \mathrm{ker} (\Bh)$, then
\[
0=\Bh(u_h,q_h)=-(\dv{u_h},q_h)_\Omega-\xiqh \shpf(\dv{u_h},q_h), \qquad \forall q_h \in \Qh.
\]
If $\Gamma_p \neq \emptyset$, then $\dv{u_h} \in \Qh$ and uniqueness of the stabilised projection problem \eqref{eq:stabilised-projection} with vanishing right-hand side implies that $\dv{u_h}=0$ on $\domact$. If $\Gamma_p=\emptyset$, then the identity above holds for all $q_h \in \Qh \cap L^2_0(\Omega)$. Since $\dv{u_h} \in \Qh$, we write $\dv{u_h}=w_h+c$ with $w_h \in \Qh \cap L^2_0(\Omega)$ and $c$ constant. Using that $\shpf(c,q_h)=0$ for all $q_h \in \Qh$, the same uniqueness argument yields $w_h=0$, and therefore $\dv{u_h}=c$ on $\domact$. In addition, since $\Gamma_u=\partial\Omega$ in this case, the divergence theorem gives $|\Omega|c=\int_\Omega \dv{u_h}=\int_{\Gamma_u} u_h\cdot n$. Hence, by Cauchy-Schwarz, we obtain,
\begin{equation*}
  \|\dv{u_h}\|_{L^2(\Omega_h)}^2
  = |\Omega_h|c^2
  = \frac{|\Omega_h|}{|\Omega|^2}\left(\int_{\Gamma_u} u_h\cdot n\right)^2
  \le \frac{|\Omega_h||\Gamma_u|}{|\Omega|^2}\|u_h \cdot n\|_{L^2(\Gamma_u)}^2
  \lesssim h^{-1}\|u_h \cdot n\|_{L^2(\Gamma_u)}^2,
\end{equation*}
where we have used that $|\Omega_h| \lesssim |\Omega|$. Combining this bound with \eqref{eq:stability-ah-basic}, we obtain the following stability result:
\begin{align}\label{eq:stability-ah}
  a_h(u_h,u_h) \simeq  \| u_h \|^2_{\mathrm{d},h},\quad \forall u_h \in \mathrm{ker}(\Bh).
\end{align}
Thus, using the standard Babu\v{s}ka--Brezzi theory \cite{Boffi2013-ge}, we obtain the global inf-sup condition:
\begin{align}\label{eq:stability-ah-inf-sup}
  \inf_{(u_h,p_h) \in \Vh \times \Qh} \sup_{(v_h,q_h) \in \Vh \times \Qh} \frac{A_h((u_h,p_h),(v_h,q_h))}{(\|u_h\|^2_{\mathrm{d},h} + \|p_h\|^2_{0,h})^{1/2} (\|v_h\|^2_{\mathrm{d},h} + \|q_h\|^2_{0,h})^{1/2}} \geq \beta, 
\end{align}
for some constant $\beta > 0$ independent of $h$ and the cut configuration. When $\Gamma_p=\emptyset$, the pressure spaces in \eqref{eq:stability-ah-inf-sup} are replaced by $\Qh \cap L^2_0(\Omega)$.

Next, we consider the full multilinear form $\tilde A_h$. We have 
\begin{align}
  \tilde A_h((u_h,p_h),(v_h,q_h))  &= A_h((u_h,p_h),(v_h,q_h)) + (v_h \cdot n, p_h)_{\Gamma_{u}}.
\end{align}
By the global inf-sup condition for $A_h$, for every admissible pair $(u_h,p_h)$ there exists a pair $(v_h,q_h)$ such that
\begin{align*}
\beta^{-1} A_h((u_h,p_h),(v_h,q_h)) &\geq \|u_h\|^2_{\mathrm{d},h} + \|p_h\|^2_{0,h}, \quad \| v_h \|^2_{\mathrm{d},h} +   \| q_h \|^2_{0,h} = \|u_h\|^2_{\mathrm{d},h}+ \|p_h\|^2_{0,h}.
\end{align*} 

We bound the boundary term using Young's inequality and the definition of the norms as follows:
\begin{align}
  (v_h \cdot n, p_h)_{\Gamma_{u}} 
  & \leq \alpha \gamma h^{-1}\| v_h \cdot n\|^2_{L^2(\bouu)} + \frac{h}{4 \alpha \gamma} \| p_h \|^2_{L^2(\bouu)} \leq \alpha \| v_h \|^2_{\mathrm{d},h} + \frac{1}{4 \alpha \gamma} \| p_h \|^2_{0,h}.
\end{align}
Combining the estimates yields
\begin{align*}
  \tilde A_h((u_h,p_h),(v_h,q_h)) &\geq \beta \|u_h\|^2_{\mathrm{d},h} + \beta \|p_h\|^2_{0,h} - \alpha (\| u_h \|^2_{\mathrm{d},h} +\|p_h\|^2_{0,h}) - \frac{1}{4 \alpha \gamma} \| p_h \|^2_{0,h}\\
  &\geq  (\beta-\alpha) \|u_h\|^2_{\mathrm{d},h} +  \left(\beta - \alpha -\frac{1}{4 \alpha \gamma}\right) \|p_h\|^2_{0,h}.
\end{align*}
Choosing $\alpha < \beta$ and assuming $\gamma$ is sufficiently large so that $\left(\beta - \alpha -\frac{1}{4 \alpha \gamma}\right) > 0$, we obtain the discrete inf-sup condition for $\tilde A_h$. For instance, taking $\alpha=\frac{\beta}{2}$ and $\gamma = \frac{2}{\beta^2}$ yields the inf-sup constant $\tilde{\beta}=\beta/4$.
\end{proof}
\section{A priori error analysis}\label{sec:error-estimates}
In this section, we prove a priori error estimates for the discrete problem \eqref{eq:discrete-problem}. The main result is stated in the following theorem, which provides an error bound in terms of the best-approximation errors in the mesh-dependent norms.
{
\begin{lemma}\label{as:extensions}
  Given $u \in [H^r(\dom)]^d$ and $p \in H^t(\dom)$, there exist extensions to $\domext$, still denoted by the same symbols, such that  
  $
    \| u \|_{\dimnor{H^{r}(\domext)}}   \lesssim
    \| u \|_{\dimnor{H^{r}(\Omega)}}$ and  
    $\| p \|_{H^{t}(\domext)} \lesssim \| p \|_{H^{t}(\Omega)}.
  $
\end{lemma}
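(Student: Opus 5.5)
This is a direct application of Stein's universal extension operator for Lipschitz domains. By the standing assumption in Section~\ref{sec:geom}, $\Omega$ is homeomorphic to an open polyhedral domain. We use this, as elsewhere in the paper, in the sense that $\partial\Omega$ is Lipschitz, so $\Omega$ is a bounded Lipschitz domain. For such domains Stein's construction gives a single linear operator $E: L^1_{\mathrm{loc}}(\Omega) \to L^1_{\mathrm{loc}}(\mathbb{R}^d)$ with $Eu|_\Omega = u$. The key property is that $E$ is bounded from $H^s(\Omega)$ to $H^s(\mathbb{R}^d)$ for every integer $s \geq 0$, with constant depending only on $s$, $d$ and the Lipschitz character of $\Omega$. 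The plan is to apply $E$ to $p$ and componentwise to $u$, and then restrict to $\domext$.

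First, for the pressure, set $\tilde p \doteq (Ep)|_{\domext}$. Then $\tilde p = p$ in $\Omega$, and
\[
\|\tilde p\|_{H^t(\domext)} \le \|Ep\|_{H^t(\mathbb{R}^d)} \lesssim \|p\|_{H^t(\Omega)}.
\]
Second, for the flux, apply $E$ to each Cartesian component $u_i$, $i=1,\dots,d$, and sum the resulting squared bounds over $i$. This gives $\|u\|_{H^r(\domext)} \lesssim \|u\|_{H^r(\Omega)}$.

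It remains to check that the constants are admissible in the sense of $\lesssim$. They must not depend on $h$ or on the cut configuration. This holds because both $\Omega$ and $\domext$ are fixed: $\domext$ was chosen independent of $h$ for $0<h\le h_0$. The constant therefore depends only on $\Omega$, $r$ and $t$. Non-integer $r,t$, if needed, follow by interpolation, since $E$ is the same linear operator for all orders.

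The only real point to verify is the regularity hypothesis on $\partial\Omega$. Stein's theorem needs $\Omega$ to be Lipschitz, i.e.\ a bounded domain with locally graph-like Lipschitz boundary. A domain that is merely homeomorphic to a polyhedron need not be Lipschitz, so this is slightly more than the literal wording of Section~\ref{sec:geom}. I would state this implicitly assumed regularity explicitly in the proof. If only finitely many $r,t$ are needed, Calderón's extension or a partition-of-unity reflection argument works equally well.
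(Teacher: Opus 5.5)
Your proposal is correct and follows the same route as the paper, whose proof is a one-line appeal to Stein's extension theorem for Lipschitz domains, applied to $p$ and componentwise to $u$ and then restricted to $\domext$. Your additional remarks are accurate refinements of that one-line argument: the constants are independent of $h$ and of the cut configuration because $\Omega$ and $\domext$ are fixed, and the proof tacitly requires $\partial\Omega$ to be Lipschitz, which is stronger than the literal ``homeomorphic to a polyhedral domain'' wording.
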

\begin{proof}
   {The result is a consequence of Stein extension theorem (see  \cite[Ch.6, Thm.5]{Stein-Other-1971a}), which holds for any Lipschitz domain $\Omega$.}
\end{proof}
}
\begin{lemma}\label{as:extensions}
  Given $u \in [H^r(\domext)]^d$ and $p \in H^t(\domext)$ admit extensions to $\domext$, still denoted by the same symbols, such that $u \in [H^r(\domext)]^d$, $p \in H^t(\domext)$ and $\dv{u} \in H^s(\domext)$ for $0 \leq r \leq k_u+1$, $0 \leq s \leq k_p+1$ and $0 \leq t \leq k_p+1$, and these extensions satisfy the stability bound
  \begin{align*}
    \| u \|_{\dimnor{H^{r}(\domext)}}   \lesssim
    \| u \|_{\dimnor{H^{r}(\Omega)}}, \qquad \| \dv{u} \|_{H^{s}(\domext)} \lesssim \| u \|_{H^{s+1}(\Omega)}, \qquad \| p \|_{H^{t}(\domext)} \lesssim \| p \|_{H^{t}(\Omega)}.
  \end{align*}
\end{lemma}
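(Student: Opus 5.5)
The plan is to build the extensions from the Stein extension operator, as in the preceding lemma, and to read off the three bounds from its boundedness on every Sobolev index. Since $\Omega$ is Lipschitz (it is homeomorphic to a polyhedral domain, and we take it Lipschitz as in the previous lemma), there is a single linear operator $E$ such that
\[
E : H^m(\Omega) \to H^m(\mathbb{R}^d)
\]
is bounded for all $m \geq 0$ simultaneously, with $(Ew)|_\Omega = w$; see \cite[Ch.6, Thm.5]{Stein-Other-1971a}. The key point is that the same operator works for every index, so we never need different extensions for different regularities. We define the extended pressure as $(Ep)|_{\domext}$ and the extended flux componentwise as $(Eu)|_{\domext}$. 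Restriction to $\domext$ is a contraction, so the first and third bounds follow at once:
\[
\|u\|_{H^r(\domext)} \lesssim \|u\|_{H^r(\Omega)}, \qquad \|p\|_{H^t(\domext)} \lesssim \|p\|_{H^t(\Omega)},
\]
for the stated ranges of $r$ and $t$. The hidden constants depend only on $\Omega$, $r$, $t$ and the fixed domain $\domext$. Crucially, they are independent of $h$ and of the cut configuration, because $\domext$ does not move with $h$.

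For the divergence bound, we use the extended flux itself rather than extending $\dv{u}$ separately. Assume $u \in [H^{s+1}(\Omega)]^d$. Then, since $E$ is bounded from $H^{s+1}(\Omega)$ for the same operator,
\[
\|\dv{u}\|_{H^s(\domext)} \le \sqrt{d}\, \|Eu\|_{H^{s+1}(\domext)} \lesssim \|u\|_{H^{s+1}(\Omega)} .
\]
This is exactly the middle estimate of the statement. By construction, $\dv{(Eu)} = \dv{u}$ in $\Omega$. Consequently, the extended pair remains consistent with the original problem on $\Omega$, which is what the later error analysis will need.

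The main obstacle is less analytic than interpretive. One must make sure that a single extension of $u$ serves both the $H^r$ bound and the $H^s$ bound on its divergence. This is why the universality of Stein's operator across indices matters: a Calderón-type extension built for a fixed order would not give this. A second subtlety is that the hypothesis on the right-hand side is $\|u\|_{H^{s+1}(\Omega)}$. The divergence estimate therefore implicitly assumes $u \in H^{s+1}(\Omega)$, and it loses one order compared with extending $\dv{u}$ directly. A direct extension of $\dv{u}$ would not coincide with the divergence of the extended flux, which is why we accept this loss. I would state this assumption explicitly when applying the lemma in the error estimates.
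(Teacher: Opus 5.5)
Your proposal is correct and follows the paper's argument. The paper's proof is a one-line appeal to Stein's extension theorem for Lipschitz domains, and you make its implicit points explicit: a single index-independent extension operator, restriction to the fixed domain $\domext$, and deriving the divergence bound from the $H^{s+1}$ bound on the extended flux, so that $\dv{u}$ on $\domext$ is the divergence of the extended $u$, which is what is used later with $\tilde g$.
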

\begin{proof}
   {The result is a consequence of Stein extension theorem (see  \cite[Ch.6, Thm.5]{Stein-Other-1971a}), which holds for any Lipschitz domain $\Omega$.}
\end{proof}

 and these extensions satisfy the stability bound

\begin{theorem}[Error estimate]
  Let $(u,p)$ be the solution of the continuous problem \eqref{eq:darcy} and let $(u_h,p_h)$ be the solution of the discrete problem \eqref{eq:discrete-problem}. {Let $u \in [H^{\max(k_u+1,k_p+2)}(\domext)]^d$ and $p \in H^{k_p+1}(\domext)$.}
  Using the conditions of Theorem~\ref{thm:stability}, the following error estimate holds:
  \begin{align}
    \|u - u_h\|_{\mathrm{d},h}^2 + \|p - p_h\|_{L^2(\Omega_h)}^2 \lesssim &
    h^{2{(k_u + \alpha)}} \| u \|^2_{\dimnor{H^{k_u+1}(\Omega)}} + h^{2(k_p+1)} {\| {u} \|^2_{H^{k_p+2}(\Omega)}} \\ &+ h^{2(k_p+1)} \| p \|^2_{H^{k_p+1}(\Omega)},
  \end{align}
  with $\alpha = 1$ if $\Gamma_u = \emptyset$ and $\alpha = 0$ otherwise.    
\end{theorem}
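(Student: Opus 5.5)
We follow the standard Strang-type argument: split the error by interpolation, apply the discrete inf-sup condition of Theorem~\ref{thm:stability} to the discrete part, and bound the resulting consistency residual term by term.

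\textbf{Splitting and stability.} Extend $(u,p)$ to $\domext$ with the extension lemma above (the second version, which also controls $\dv{u}$). Write
\[
u-u_h=(u-\Iv u)+(\Iv u-u_h)\eqqcolon \rho_u+e_h,\qquad p-p_h=(p-\Iq p)+(\Iq p-p_h)\eqqcolon \rho_p+\epsilon_h .
\]
By Theorem~\ref{thm:stability} there is $(v_h,q_h)$ with unit norm such that
\[
\tilde\beta\,(\|e_h\|_{\mathrm{d},h}^2+\|\epsilon_h\|_{0,h}^2)^{1/2}\le \tilde A_h((e_h,\epsilon_h),(v_h,q_h)).
\]
Integrating by parts on $\Omega$ shows that the exact solution satisfies \eqref{eq:discrete-problem} up to the stabilisation terms. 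The boundary term $(v_h\cdot n,p)_{\Gamma_u}$ in $\tilde b$ is exactly what is produced by integrating $(\nabla p,v_h)_\Omega$ by parts, and $\gamma h^{-1}(u\cdot n-u_\Gamma,\cdot)_{\Gamma_u}=0$. This gives the Galerkin-type identity
\[
\tilde A_h((e_h,\epsilon_h),(v_h,q_h))=\tilde A_h((-\rho_u,-\rho_p),(v_h,q_h))+\xivh\shuf(u,v_h)\text{-type terms}+\xiqh\shpf(\dv{u},\cdot)\text{-type terms}.
\]
More precisely, the residual consists of $\xivh\shuf(\Iv u,v_h)$, $\xiqh\shpf(\dv{v_h},\Iq p)$ and $\xiqh\shpf(\Iq\dv u,q_h)$; in the last one we use $\dv\Iv u=\Iq\dv u$. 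By the weak consistency \eqref{eq:weak-consistency}, these are bounded by $h^{k_u+1}\|u\|_{H^{k_u+1}}+h^{k_p+1}\|p\|_{H^{k_p+1}}+h^{k_p+1}\|\dv u\|_{H^{k_p+1}}$. The last quantity is controlled by $\|u\|_{H^{k_p+2}(\Omega)}$.

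\textbf{Interpolation terms.} Next we bound $\tilde A_h((\rho_u,\rho_p),(v_h,q_h))$ term by term.
\begin{itemize}
\item $a(\rho_u,v_h)$ is $O(h^{k_u+1})$.
\item In $b(v_h,\rho_p)$, only the $\Omega$ part appears. Since $\dv v_h\in\Qh$ is discontinuous and $\Omega$ is cut, we cannot use orthogonality, so we bound it directly by $\|\dv v_h\|\,\|\rho_p\|$, giving $O(h^{k_p+1})$.
\item The term $b_h(\rho_u,q_h)$ involves $\dv\rho_u=\dv u-\Iq\dv u$ and is $O(h^{k_p+1})\|u\|_{H^{k_p+2}}$.
\item For $(v_h\cdot n,\rho_p)_{\Gamma_u}$, use $h^{-1/2}\|v_h\cdot n\|_{\Gamma_u}\le\gamma^{-1/2}\|v_h\|_{\mathrm{d},h}$ together with the cut trace inequality \eqref{eq:cut-trace} applied to $\rho_p$, which gives $h^{1/2}\|\rho_p\|_{\Gamma_u}\lesssim h^{k_p+1}\|p\|$.
\end{itemize}

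\textbf{Main obstacle: the Nitsche penalty.} The critical term is $\gamma h^{-1}(\rho_u\cdot n,v_h\cdot n)_{\Gamma_u}$, together with the contribution $\gamma h^{-1}\|\rho_u\cdot n\|^2_{\Gamma_u}$ that appears in $\|u-u_h\|_{\mathrm{d},h}$. By \eqref{eq:cut-trace} and local interpolation estimates,
\[
h^{-1/2}\|\rho_u\cdot n\|_{\Gamma_u}\lesssim h^{-1}\|\rho_u\|_{L^2(\Omega_h)}+\|\nabla\rho_u\|_{L^2(\Omega_h)}\lesssim h^{k_u}\|u\|_{H^{k_u+1}} .
\]
This loses one power of $h$ and produces the $h^{2k_u}$ rate, i.e.\ $\alpha=0$, when $\Gamma_u\neq\emptyset$. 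When $\Gamma_u=\emptyset$ these boundary terms vanish and the full order $h^{k_u+1}$ is retained. To compensate for the lost power one would try the normal-trace-moment property of $\Iv$ (facet moments against $\mathbb{P}_{k_u}$). However, on cut facets $\Gamma_u$ does not align with mesh facets, so this does not help; I would therefore accept the suboptimal rate, consistent with the statement.

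\textbf{Conclusion.} Finally, apply the triangle inequality, use $\|\rho_u\|_{\mathrm{d},h}$ (whose divergence part is $O(h^{k_p+1})$ and boundary part is $O(h^{k_u})$), use $\|\rho_p\|_{L^2(\Omega_h)}\lesssim h^{k_p+1}\|p\|$, and use the extension stability bounds to return to norms on $\Omega$.
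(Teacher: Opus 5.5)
Your proposal is correct and follows essentially the same route as the paper. The paper applies the inf-sup condition of Theorem~\ref{thm:stability} to $(u_h-w_h,p_h-r_h)$ for an arbitrary discrete pair, uses the same consistency identity for $\ell_h$, and then sets $w_h=\Iv(u)$, $r_h=\Iq(p)$. With that choice its decomposition reduces exactly to your three stabilisation residuals, and the boundary terms are handled via \eqref{eq:cut-trace}, which gives the same $h^{k_u}$ loss from the Nitsche penalty. The only blemish is notational: you write $\tilde A_h$ and $b_h$ applied to $\rho_u,\rho_p$, but those terms should use the unstabilised forms $a$, $\gamma h^{-1}(\cdot\,,\cdot)_{\Gamma_u}$, $\tilde b$ and $b$, since $\shuf$ and $\shpf$ need not be defined on non-discrete arguments.
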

\begin{proof}
We prove the result in the case $\Gamma_p \neq \emptyset$. When $\Gamma_p=\emptyset$, the pressure quantities above are understood in $\Qh \cap L^2_0(\Omega)$, and the proof is analogous to the one below.
Let $(u,p)$ be the solution of the continuous problem~\eqref{eq:darcy} and let $(u_h,p_h)$ be the solution of the discrete problem~\eqref{eq:discrete-problem}. Let $(w_h,r_h) \in  \Vh \times  \Qh$. Using the expression of the discrete problem~\eqref{eq:discrete-problem} and the stability estimate in Theorem~\ref{thm:stability}, there exists $(v_h,q_h) \in \Vh \times \Qh$ such that
\begin{align}
& \| u_h - w_h \|^2_{\mathrm{d},h} + \| p_h - r_h \|^2_{0,h} \lesssim a_h(u_h - w_h,v_h) + \tilde b_h(v_h,p_h - r_h) + b_h(u_h - w_h,q_h) \nonumber \\
& = \ell_h(v_h,q_h) - a_h(w_h,v_h) - \tilde b_h(v_h,r_h) - b_h(w_h,q_h) \doteq (*). \label{eq:error-estimate-1}
\end{align}
Moreover, the pair $(v_h,q_h)$ can be chosen so that $\| v_h \|_{\mathrm{d},h} + \|q_h \|_{0,h} \lesssim \| u_h - w_h \|_{\mathrm{d},h} + \| p_h - r_h \|_{0,h}$.
{Using the definition of $\ell_h$ in \eqref{eq:discrete-problem-operator}, the strong form of \eqref{eq:darcy}, integration by parts and the boundary conditions, we obtain
\begin{align}
\ell_h(v_h,q_h)
&= (f,v_h)_{\Omega} + \gamma h^{-1} (u_\Gamma,v_h \cdot n )_{\bouu} - (v_h \cdot n,p_\Gamma)_{\boup} + (g,q_h)_{\Omega} \nonumber \\
&= (\eta u + \nabla p, v_h)_{\Omega} + \gamma h^{-1} (u_\Gamma,v_h \cdot n )_{\bouu} - (v_h \cdot n,p_\Gamma)_{\boup} + (g,q_h)_{\Omega} \nonumber \\
&= a(u,v_h) + \gamma h^{-1}(u \cdot n, v_h \cdot n)_{\bouu} + \tilde b(v_h,p) + b(u,q_h). \label{eq:error-estimate-3}
\end{align}}
Invoking \eqref{eq:error-estimate-3} in~\eqref{eq:error-estimate-1}, we can express the right-hand side of~\eqref{eq:error-estimate-1} as:
\begin{align} \label{eq:error-estimate-4}
  (*) = & a(u-w_h,v_h) + \gamma h^{-1}((u-w_h) \cdot n, v_h \cdot n)_{\bouu} + \tilde b(v_h,p-r_h) + b(u-w_h,q_h) \\ & - \xivh \shuf{(w_h,v_h)} + \xiqh \shpf{(\dv{v_h},r_h)} + \xiqh \shpf{(\dv{w_h},q_h)}.\nonumber
\end{align}
The first four terms on the right-hand side of~\eqref{eq:error-estimate-4} can be bounded using the Cauchy--Schwarz and Young inequalities together with the definition of the discrete norms:
\begin{align}
  & a(u-w_h,v_h) + \gamma h^{-1}((u-w_h) \cdot n, v_h \cdot n)_{\bouu} + \tilde b(v_h,p-r_h) + b(u-w_h,q_h) \\
& \lesssim ( \| u - w_h \|_{\mathrm{d},h} + \|p - r_h \|_{0,h} ) ( \| v_h \|_{\mathrm{d},h} + \|q_h \|_{0,h} ).
\end{align}
For the stabilisation terms, we proceed as follows:
\begin{align}
   -& \xivh \shuf{(w_h,v_h)} + \xiqh \shpf{(\dv{v_h},r_h)} + \xiqh \shpf{(\dv{w_h},q_h)} \\
   =& \xivh \shuf{(\Iv(u-w_h),v_h)} - \xiqh \shpf{(\dv{v_h},\Iq(p-r_h))} - \xiqh \shpf{(\dv{\Iv(u-w_h)},q_h)} \\ & - \xivh \shuf{(\Iv(u),v_h)} + \xiqh \shpf{(\dv{v_h},\Iq(p))} + \xiqh \shpf{(\dv{\Iv(u)},q_h)}. \label{eq:error-estimate-5}
\end{align}
The first three terms can readily be bounded by the continuity of the stabilisation bilinear forms and the stability of the interpolants $\Iv$ and $\Iq$:
\begin{align}
& \xivh \shuf{(\Iv(u-w_h),v_h)} - \xiqh \shpf{(\dv{v_h},\Iq(p-r_h))} - \xiqh \shpf{(\dv{\Iv(u-w_h)},q_h)} \\
& \lesssim ( \| u - w_h \|_{\mathrm{d},h} + \|p - r_h \|_{0,h} ) ( \| v_h \|_{\mathrm{d},h} + \|q_h \|_{0,h} ).
\end{align}
{In the following, let us use the compact notation $r = k_u+1$ and $s=t=k_p+1$.} 
The last three terms in~\eqref{eq:error-estimate-5} can be bounded by the weak consistency of the stabilisation bilinear forms in Assumption~\ref{as:stabilisation-terms}:
\begin{align*}
- \xivh &\shuf{(\Iv(u),v_h)} + \xiqh \shpf{(\dv{v_h},\Iq(p))} + \xiqh \shpf{(\dv{\Iv(u)},q_h)} \\
& \lesssim (h^{r} \| u \|_{\dimnor{H^{r}(\Omega)}} + {h^{s} \| {u} \|_{H^{s+1}(\Omega)}} + h^{t} \| p \|_{H^{t}(\Omega)})( \| v_h \|_{\mathrm{d},h} + \|q_h \|_{0,h} ),
\end{align*}
where, for the last term, we have used the commutativity property $\dv{\Iv(u)}=\Iq(\dv{u})$ together with the weak consistency of $\shpf$ applied to $\dv{u}$.
Using the estimate for $(v_h,q_h)$ stated after \eqref{eq:error-estimate-1}, we obtain
\begin{align*}
&  \| u_h - w_h \|_{\mathrm{d},h} + \| p_h - r_h \|_{0,h} \\
&  \lesssim
  \| u - w_h \|_{\mathrm{d},h} + \| p - r_h \|_{0,h}
  + h^{r} \| u \|_{\dimnor{H^{r}(\Omega)}} + {h^{s} \|  {u} \|_{H^{s+1}(\Omega)}} + h^{t} \| p \|_{H^{t}(\Omega)},
\end{align*}
after dividing by $\| u_h - w_h \|_{\mathrm{d},h} + \| p_h - r_h \|_{0,h}$ when this quantity is nonzero. {Choosing $w_h=\Iv(u)$ and $r_h=\Iq(p)$, the triangle inequality yields}
\begin{align*}
  {\|u-u_h\|_{\mathrm{d},h}} &{\leq \|u-\Iv(u)\|_{\mathrm{d},h} + \|u_h-\Iv(u)\|_{\mathrm{d},h},} \\
  {\|p-p_h\|_{0,h}} &{\leq \|p-\Iq(p)\|_{0,h} + \|p_h-\Iq(p)\|_{0,h}.}
\end{align*}
{The interpolation estimates for $u$, $\dv{u}$ and $p$, together with {the trace inequality for the boundary term in $\|\cdot\|_{\mathrm{d},h}$ and} Lemma~\ref{as:extensions}, then prove the theorem. In the case $\Gamma_{u} = \emptyset$, the trace inequality is not required.}
\end{proof}

{
\begin{remark}
We note the estimate is suboptimal for $k_u = k_p$ (the case of trimmed div-conforming spaces, i.e., Raviart-Thomas) for $\Gamma_{u} \neq \emptyset$, due to the interpolation error of the flux boundary penalty term. The result is always optimal for complete div-conforming polynomial spaces, i.e., BDM finite element spaces, since $k_u = k_p+1$. We note that optimal results can always be obtained in the body-fitted case because one can enforce the interpolant to preserve null traces. We have not observed this sub-optimality in the numerical experiments in Section~\ref{sec:num-results}.
\end{remark}
}

\subsection*{Pressure-robust flux error estimate}
We can improve the error estimate for the flux using ideas similar to those used in divergence-free mixed \ac{fem} for the Stokes system~\cite{John2017-hu}. However, since we consider the weak imposition of the flux boundary conditions, the results only hold for $\Gamma_{u} = \emptyset$; we note that we cannot enforce $w_h \cdot n = 0$ on $\Gamma_{u}$ since $\Gamma_{u}$ is embedded in $\Omega_h$. We define the space $\Vh(g) = \{ v_h \in \Vh \ : \ \dv{v_h} = -\pi_{h}^{0}(g) \}$. In the following, we use Lemma~\ref{as:extensions} and write $\tilde g$ for the extension of $-\dv{u}$ to $\domext$, so that $\tilde g=g$ in $\Omega$.

\begin{assumption}\label{as:pressure-kernel-approx}
  {We assume that the kernel of the pressure stabilisation, $\mathcal{K}_{h}^{p} \doteq \{ q_h \in \Qh \ : \ \shpf(q_h,\phi_h)=0 \ \forall \phi_h \in \Qh \}$, 
  has the following approximation property: for every $\tilde r \in H^t(\domext)$ with $0 \leq t \leq k_p+1$, there exists $w_h \in \mathcal{K}_{h}^{p}$ such that $\|\tilde r-w_h\|_{L^2(\Omega_h)}
    \lesssim h^t \|\tilde r\|_{H^t(\domext)}$.}
\end{assumption}

\begin{lemma}[Approximation of the stabilised projection]\label{lem:stabilised-projection-alt}
  Let $\tilde r \in H^t(\domext)$ with $0 \leq t \leq k_p+1$, and let $r$ denote its restriction to $\Omega$. Then, the stabilised projection $\pi_h^0(r) \in \Qh$ defined in \eqref{eq:stabilised-projection} satisfies
  \begin{equation}
    \|\tilde r-\pi_h^0(r)\|_{L^2(\Omega_h)} \lesssim h^t \|\tilde r\|_{H^t(\domext)}.
  \end{equation}
\end{lemma}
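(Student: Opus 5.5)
We reduce the claim to a best-approximation property of $\pi_h^0$ in the kernel $\mathcal{K}_h^p$ of Assumption~\ref{as:pressure-kernel-approx}, and then use that assumption.

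\textbf{Step 1 (splitting).} Let $w_h \in \mathcal{K}_h^p$ be the approximant from Assumption~\ref{as:pressure-kernel-approx}, so that $\|\tilde r - w_h\|_{L^2(\Omega_h)} \lesssim h^t \|\tilde r\|_{H^t(\domext)}$. By the triangle inequality,
\begin{equation*}
\|\tilde r-\pi_h^0(r)\|_{L^2(\Omega_h)} \le \|\tilde r - w_h\|_{L^2(\Omega_h)} + \|w_h-\pi_h^0(r)\|_{L^2(\Omega_h)},
\end{equation*}
so it suffices to bound the discrete error $e_h \doteq \pi_h^0(r) - w_h \in \Qh$.

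\textbf{Step 2 (error equation).} Since $w_h \in \mathcal{K}_h^p$, we have $\shpf(w_h,q_h)=0$ for all $q_h$. Subtracting $(w_h,q_h)_\Omega$ from both sides of \eqref{eq:stabilised-projection} gives
\begin{equation*}
(e_h,q_h)_\Omega + \xiqh \shpf(e_h,q_h) = (r - w_h, q_h)_\Omega, \qquad \forall q_h \in \Qh.
\end{equation*}
Thus $\pi_h^0$ is consistent on the kernel: the stabilisation produces no consistency error, and no weak-consistency estimate is needed.

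\textbf{Step 3 (stability).} Test with $q_h = e_h$. Using \eqref{eq:stab-assump-2} (with constants depending on $\xiqh$) and Cauchy--Schwarz,
\begin{equation*}
\|e_h\|^2_{L^2(\Omega_h)} \lesssim \|r-w_h\|_{L^2(\Omega)}\|e_h\|_{L^2(\Omega)} \le \|\tilde r-w_h\|_{L^2(\Omega_h)}\|e_h\|_{L^2(\Omega_h)},
\end{equation*}
where we used $r=\tilde r$ on $\Omega\subset\Omega_h$. Dividing by $\|e_h\|_{L^2(\Omega_h)}$ and applying Assumption~\ref{as:pressure-kernel-approx} again bounds $\|e_h\|_{L^2(\Omega_h)}$ by $h^t\|\tilde r\|_{H^t(\domext)}$. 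Combining with Step 1 gives the claim.

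The only delicate point is Step 3: the control of $e_h$ on all of $\Omega_h$, not just on $\Omega$, must come from the norm equivalence \eqref{eq:stab-assump-2}, since the right-hand side only sees $\Omega$. This is robust with respect to cut cells, and it uses no weak-consistency estimate. That is why the kernel approximation assumption replaces \eqref{eq:weak-consistency-2}. Alternatively, one could use a standard interpolant such as $\Iq(\tilde r)$ together with \eqref{eq:weak-consistency-2}; this gives the same rate and avoids the kernel assumption.
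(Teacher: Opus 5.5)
Your proof is correct and follows the paper's argument essentially step for step. You take $w_h \in \mathcal{K}_h^p$ from Assumption~\ref{as:pressure-kernel-approx}, derive the stabilisation-free error equation for $\pi_h^0(r)-w_h$, test with the error using \eqref{eq:stab-assump-2} and $\Omega\subset\Omega_h$, and conclude by the triangle inequality. Your closing aside is also sound: with $w_h=\Iq(\tilde r)$, the extra term $\xiqh\shpf(\Iq(\tilde r),e_h)$ is controlled by \eqref{eq:weak-consistency-2}, so the lemma already follows from Assumption~\ref{as:stabilisation-terms} alone.
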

\begin{proof}
  {We consider $w_h \in \mathcal{K}_{h}^{p}$. Using the definition of the stabilised projection and the fact that $\shpf(w_h,q_h)=0$ for all $q_h \in \Qh$, we get}
  \begin{align*}
    (\pi_h^0(r)-w_h,q_h)_{\Omega} + \xiqh \shpf(\pi_h^0(r)-w_h,q_h) = (r-w_h,q_h)_{\Omega},
    \qquad \forall q_h \in \Qh.
  \end{align*}
  Taking $q_h=\pi_h^0(r)-w_h$ and using \eqref{eq:stab-assump-2}, we obtain
  \begin{align*}
    \|\pi_h^0(r)-w_h\|_{L^2(\Omega_h)}^2
    &\lesssim
    (r-w_h,\pi_h^0(r)-w_h)_{\Omega} \\
    &\le
    \|r-w_h\|_{L^2(\Omega)}\|\pi_h^0(r)-w_h\|_{L^2(\Omega)}
    \lesssim
    \|r-w_h\|_{L^2(\Omega)}\|\pi_h^0(r)-w_h\|_{L^2(\Omega_h)}.
  \end{align*}
  Hence,
  \begin{equation*}
    \|\pi_h^0(r)-w_h\|_{L^2(\Omega_h)} \lesssim \|r-w_h\|_{L^2(\Omega)}.
  \end{equation*}
  {By the triangle inequality,}
  \begin{equation*}
    \|\tilde r-\pi_h^0(r)\|_{L^2(\Omega_h)}
    \le
    \|\tilde r-w_h\|_{L^2(\Omega_h)} + \|\pi_h^0(r)-w_h\|_{L^2(\Omega_h)}
    \lesssim
    \|\tilde r-w_h\|_{L^2(\Omega_h)} + \|r-w_h\|_{L^2(\Omega)}.
  \end{equation*}
  {Invoking Assumption~\ref{as:pressure-kernel-approx}, we conclude that $\|\tilde r-\pi_h^0(r)\|_{L^2(\Omega_h)} \lesssim h^t \|\tilde r\|_{H^t(\domext)}$.}
\end{proof}

\begin{theorem}[Pressure-robust error estimate]\label{thm:pressure-robust-alt}
  {Let $u \in [H^{\max(k_u+1,k_p+2)}(\domext)]^d$ . Due to Lemma~\ref{as:extensions}, Assumption~\ref{as:pressure-kernel-approx}, and the conditions of Theorem~\ref{thm:stability}, the following error estimate holds for $\Gamma_{u} = \emptyset$:}
  \begin{align}
    & \|u - u_h\|_{\dimnor{L^2(\Omega)}} \lesssim h^{k_u+1} \| {u} \|_{H^{k_u+1}(\Omega)} + h^{k_p+1} \| {u} \|_{H^{k_p+2}(\Omega)}, \\
    & \| \dv{u} - \dv{u_h} \|_{L^2(\Omega)} \lesssim { h^{k_p+1} \| {u} \|_{H^{k_p+2}(\Omega)}}.
  \end{align}
\end{theorem}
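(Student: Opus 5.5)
The plan is to exploit the fact that, for $\Gamma_u=\emptyset$, discretely divergence-free test functions decouple the flux equation from the pressure. In this case $\Gamma_p=\partial\Omega$, the pressure space is the full $\Qh$, $\tilde b_h = b_h$, and $a_h = a + \xivh\shuf$ contains no boundary penalty.

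\textbf{Divergence estimate.} By Lemma~\ref{lem:mass-conservation}, $\dv{u_h} = -\piqst(g)$ on $\domact$, while $\dv{u}=-g$ in $\Omega$. Hence
\[
\|\dv{u}-\dv{u_h}\|_{L^2(\Omega)} = \|g-\piqst(g)\|_{L^2(\Omega)} \le \|\tilde g - \piqst(g)\|_{L^2(\domact)} .
\]
Lemma~\ref{lem:stabilised-projection-alt} with $t=k_p+1$ bounds this by $h^{k_p+1}\|\tilde g\|_{H^{k_p+1}(\domext)}$. Lemma~\ref{as:extensions} then gives $\|\tilde g\|_{H^{k_p+1}(\domext)} \lesssim \|u\|_{H^{k_p+2}(\Omega)}$, which is the second bound.

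\textbf{Consistency on the divergence-free subspace.} Take $v_h \in \Vh(0)$, so $\dv{v_h}=0$ on $\domact$. Then $b_h(v_h,p_h) = 0$, because $\shpf(0,\cdot)=0$. The first discrete equation therefore reduces to
\[
a_h(u_h,v_h) = (f,v_h)_\Omega - (v_h\cdot n, p_\Gamma)_{\partial\Omega}.
\]
Substituting $f = u + \nabla p$ and integrating by parts (using $\dv{v_h}=0$ and $p=p_\Gamma$ on $\partial\Omega$), the pressure terms cancel exactly, leaving $(f,v_h)_\Omega - (v_h\cdot n,p_\Gamma)_{\partial\Omega} = a(u,v_h)$. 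This gives the pressure-free error identity
\[
a_h(u_h-w_h,v_h) = a(u-w_h,v_h) - \xivh \shuf(w_h,v_h), \qquad \forall w_h\in\Vh(g),\ v_h \in \Vh(0).
\]
Now choose $v_h = u_h - w_h$, which lies in $\Vh(0)$ since both $u_h$ and $w_h$ lie in $\Vh(g)$. Coercivity from \eqref{eq:stab-assump-1} yields
\[
\|u_h-w_h\|_{L^2(\domact)} \lesssim \|u-w_h\|_{L^2(\Omega)} + \sup_{v_h\in\Vh(0)} \frac{\shuf(w_h,v_h)}{\|v_h\|_{L^2(\domact)}} .
\]

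\textbf{Construction of $w_h \in \Vh(g)$ (main obstacle).} The interpolant $\Iv(u)$ is not in $\Vh(g)$ in general: by commutativity, $\dv{\Iv(u)} = \Iq(\dv{u}) = -\Iq(\tilde g)$, whereas $\Vh(g)$ requires divergence $-\piqst(g)$. I would correct it as $w_h = \Iv(u) + z_h$, where $z_h\in\Vh$ satisfies
\[
\dv{z_h} = \Iq(\tilde g) - \piqst(g) \in \Qh \quad \text{on } \domact, \qquad \|z_h\|_{L^2(\domact)} \lesssim \|\Iq(\tilde g) - \piqst(g)\|_{L^2(\domact)}.
\]
To build $z_h$, I would reuse the right inverse of the divergence from the first step of the proof of Theorem~\ref{thm:stability}: solve the Dirichlet problem with $\xi=0$ on $\partial\Omega$, extend the resulting field into $S$ via \cite{Galdi}, and interpolate with $\Iv$. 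This construction works for an arbitrary datum in $\Qh$ and gives a cut-independent bound. The triangle inequality, the interpolation estimate for $\Iq$, and Lemma~\ref{lem:stabilised-projection-alt} then give
\[
\|z_h\|_{L^2(\domact)} \lesssim h^{k_p+1}\|u\|_{H^{k_p+2}(\Omega)}.
\]

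\textbf{Bounding the remaining terms.} The stabilisation term is split as $\shuf(w_h,v_h) = \shuf(\Iv(u),v_h) + \shuf(z_h,v_h)$. The first piece is bounded by weak consistency \eqref{eq:weak-consistency-1} with $s=k_u+1$. The second piece is bounded by continuity of $\shuf$ in $L^2(\domact)$ together with the bound on $z_h$. Likewise,
\[
\|u-w_h\|_{L^2(\Omega)} \le \|u-\Iv(u)\|_{L^2(\domact)} + \|z_h\|_{L^2(\domact)}.
\]
A final triangle inequality $\|u-u_h\|_{L^2(\Omega)} \le \|u-w_h\|_{L^2(\Omega)} + \|u_h-w_h\|_{L^2(\domact)}$, combined with Lemma~\ref{as:extensions}, gives the first bound. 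No pressure norm appears anywhere in it.
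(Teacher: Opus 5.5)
Your proposal is correct and follows essentially the same route as the paper. The shared steps are: the divergence-corrected interpolant $w_h=\Iv(u)+z_h$, with $z_h$ taken from the right inverse of the divergence built in the proof of Theorem~\ref{thm:stability}; the pressure-free error identity on $\Vh(0)$ tested with $u_h-w_h$; the splitting $\shuf(w_h,\cdot)=\shuf(\Iv(u),\cdot)+\shuf(z_h,\cdot)$, handled by weak consistency and continuity; and Lemma~\ref{lem:stabilised-projection-alt} to bound both $\|\Iq(\tilde g)-\piqst(g)\|_{L^2(\Omega_h)}$ and the divergence error. The only difference is cosmetic: you derive the consistency identity from the strong form by integration by parts, whereas the paper invokes the continuous weak form \eqref{eq:weak-form} directly.
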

\begin{proof}
For $\partial \Omega = \boup$, let $\tilde g \in H^s(\domext)$ be the stable extension of $-\dv{u}$ introduced above and define
\begin{equation*}
  \delta_h \doteq \Iq(\tilde g) - \pi_h^0(g) \in \Qh.
\end{equation*}
Using the discrete inf-sup construction in the proof of Theorem~\ref{thm:stability}, there exists $z_h \in \Vh$ such that
\begin{equation}\label{eq:divergence-correction}
  \dv{z_h} = \delta_h \quad \text{in } \Omega_h,
  \qquad
  \|z_h\|_{\mathrm{d},h} \lesssim \|\delta_h\|_{L^2(\Omega_h)}.
\end{equation}
Next, we define the divergence-corrected interpolant
\begin{equation*}
  w_h \doteq \Iv(u) + z_h.
\end{equation*}
Since $\Iv$ commutes with the divergence on $\domext$ and $\dv{u}=-\tilde g$ in $\domext$, we obtain
\begin{equation*}
  \dv{w_h}
  =
  \Iq(\dv{u}) + \dv{z_h}
  =
  -\Iq(\tilde g) + \dv{z_h}
  =
  -\pi_h^0(g),
\end{equation*}
and thus $w_h \in \Vh(g)$. In particular, $u_h-w_h \in \Vh(0)$.

Using the discrete problem~\eqref{eq:discrete-problem}, the continuous problem~\eqref{eq:weak-form}, the fact that $\Gamma_u=\emptyset$, and that $u_h-w_h \in \Vh(0)$, we obtain
\begin{align*}
  a_h(u_h-w_h,v_h)
  =
  (u-w_h,v_h)_{\Omega} - \xivh \shuf(w_h,v_h),
  \qquad \forall v_h \in \Vh(0).
\end{align*}
Taking $v_h=u_h-w_h \in \Vh(0)$ and using the coercivity of $a_h$ in $\Vh(0)$, we get
\begin{align}\label{eq:bound-uh-1}
  \|u_h-w_h\|_{\mathrm{d},h}^2
  &\lesssim
  (u-w_h,u_h-w_h)_{\Omega} - \xivh \shuf(w_h,u_h-w_h).
\end{align}
Since $w_h=\Iv(u)+z_h$, we can split the stabilisation term as
\begin{align}\label{eq:bound-uh-2}
  - \xivh \shuf(w_h,u_h-w_h)
  =
  - \xivh \shuf(\Iv(u),u_h-w_h) - \xivh \shuf(z_h,u_h-w_h).
\end{align}
Using the Cauchy--Schwarz inequality for the first term on the right-hand side of \eqref{eq:bound-uh-1} and the weak consistency of $\shuf$ on $\Iv(u)$ and the continuity of $\shuf$ in Assumption~\ref{as:stabilisation-terms} for the terms in \eqref{eq:bound-uh-2}, we obtain
\begin{align*}
  \|u_h-w_h\|_{\mathrm{d},h}^2
  &\lesssim
  \|u-w_h\|_{\dimnor{L^2(\Omega)}} \|u_h-w_h\|_{\dimnor{L^2(\Omega)}}
  + h^r \|u\|_{\dimnor{H^r(\Omega)}} \|u_h-w_h\|_{\mathrm{d},h}
  \\
  &\quad
  + \|z_h\|_{\mathrm{d},h}\|u_h-w_h\|_{\mathrm{d},h}.
\end{align*}
{Again, we use the compact notation $r = k_u+1$ and $s = k_p+1$.}
Dividing by $\|u_h-w_h\|_{\mathrm{d},h}$ and using that $\|u_h-w_h\|_{\dimnor{L^2(\Omega)}} \le \|u_h-w_h\|_{\mathrm{d},h}$, we infer that
\begin{align}\label{eq:alt-pressure-robust-bound-1}
  \|u_h-w_h\|_{\mathrm{d},h}
  &\lesssim
  \|u-w_h\|_{\dimnor{L^2(\Omega)}}
  + h^r \|u\|_{\dimnor{H^r(\Omega)}}
  + \|z_h\|_{\mathrm{d},h}.
\end{align}
Using the triangle inequality and the norm definitions, we obtain
\begin{align}\label{eq:alt-pressure-robust-bound-2}
  \|u-w_h\|_{\dimnor{L^2(\Omega)}} 
  & 
  \leq
  \|u-\Iv(u)\|_{\dimnor{L^2(\Omega)}} + \|z_h\|_{\dimnor{L^2(\Omega)}} \nonumber \\
  & \leq
  \|u-\Iv(u)\|_{\dimnor{L^2(\Omega)}} + \|z_h\|_{\mathrm{d},h}.
\end{align}
Combining the triangle inequality with \eqref{eq:alt-pressure-robust-bound-1} and \eqref{eq:alt-pressure-robust-bound-2}, we arrive at
\begin{equation}\label{eq:alt-pressure-robust-bound-3}
  \|u-u_h\|_{\dimnor{L^2(\Omega)}}
  \lesssim
  \|u-\Iv(u)\|_{\dimnor{L^2(\Omega)}}
  +
  \|z_h\|_{\mathrm{d},h}
  +
  h^r \|u\|_{\dimnor{H^r(\Omega)}}.
\end{equation}
Moreover,
\begin{align*}
  \|\delta_h\|_{L^2(\Omega_h)}
  &\leq
  \|\tilde g-\Iq(\tilde g)\|_{L^2(\Omega_h)}
  +
  \|\tilde g-\pi_h^0(g)\|_{L^2(\Omega_h)}
  \\
  &\lesssim
  h^s \|\tilde g\|_{H^s(\domext)}
  \lesssim
   { h^s \| {u} \|_{H^{s+1}(\Omega)}}.
\end{align*}
  {Here we have used the approximation properties of $\Iq$, Lemma~\ref{lem:stabilised-projection-alt} applied to the extension $\tilde g$ of $g$, and the stability of the extension.}
Using \eqref{eq:divergence-correction}, we get
\begin{equation*}
  \|z_h\|_{\mathrm{d},h}
  \lesssim
  { h^s \| {u} \|_{H^{s+1}(\Omega)}}.
\end{equation*}
Putting together the previous estimates with the approximation properties of $\Iv$ in Assumption~\ref{as:discrete-spaces} proves the first bound. For the second estimate, we use directly the identity below together with Lemma~\ref{lem:stabilised-projection-alt}.
\begin{equation*}
  \dv{u} - \dv{u_h} = -g + \pi_h^0(g) = -\tilde g + \pi_h^0(g)
\end{equation*}
in $\Omega$. Therefore,
\begin{equation*}
  \|\dv{u} - \dv{u_h}\|_{L^2(\Omega)}
  \le \|\tilde g - \pi_h^0(g)\|_{L^2(\Omega_h)}
  \lesssim h^s \|\tilde g\|_{H^s(\domext)}
  \lesssim  { h^s \| {u} \|_{H^{s+1}(\Omega)}},
\end{equation*}
which completes the proof.
\end{proof}

\begin{remark}[Pressure post-processing~\cite{Lehrenfeld-MathComput-2024j}]\label{rem:postprocessing}
{By~\cite[Lemma~1]{Lehrenfeld-MathComput-2024j}, when $\Gamma_u = \emptyset$ and the discrete load is $f_h = \pi_h^0(f)$, the flux $u_h$ of the present method coincides with that of~\cite{Lehrenfeld-MathComput-2024j} away from cut elements and their direct neighbours. Consequently, the element-local and patch-wise post-processings of~\cite[Theorems~4.1 and~4.2]{Lehrenfeld-MathComput-2024j} apply and yield a post-processed pressure $p_h^*$ converging at rate $O(h^{k_p+2})$ in $L^2(\Omega)$, one order above the direct estimate. 
}
\end{remark}

\begin{remark}
{
The body-fitted simplification of the proposed method, where the stabilisation terms vanish since $\Omega = \Omega_h$ and there are no cut cells, was presented in \cite{Burman-JNumerMath-2022z}. However, the authors only considered the numerical analysis of the symmetric variant (which destroys mass conservation). The stability and convergence analyses above readily extend to the body-fitted case, with one difference. One can use optimal interpolation properties in the discrete space of null traces so there is no optimality loss for trimmed polynomial spaces.}
\end{remark} 

\section{Augmented Lagrangian formulation}\label{sec:aug-lag}
In this section, we extend the discrete problem \eqref{eq:discrete-problem} using an augmented Lagrangian approach. When $\Gamma_p \neq \emptyset$, Lemma~\ref{lem:mass-conservation} implies that the solution $u_h \in \Vh$ of~\eqref{eq:discrete-problem} satisfies
\begin{align}\label{eq:aug-lag-formulation}
  c_{\mathrm{AL}}(u_h,v_h) = \ell_{\mathrm{AL}}(v_h),
\end{align}
with
\begin{align}
  c_{\mathrm{AL}}(u_h,v_h) \doteq (\dv{u_h},\dv{v_h})_{\Omega} + \xiqh \shpf(\dv{u_h},\dv{v_h}),  \qquad
  \ell_{\mathrm{AL}}(v_h)  \doteq -(g,\dv{v_h})_{\Omega}.
\end{align}
Accordingly, the augmented Lagrangian formulation of the discrete problem~\eqref{eq:discrete-problem} is defined as follows: find $(u_h,p_h) \in \Vh \times \Qh$ such that
\begin{align} \label{eq:aug-lag-form}
  \tilde A_h((u_h,p_h),(v_h,q_h)) + \tau_{\mathrm{AL}} c_{\mathrm{AL}}(u_h,v_h) = \ell_h(v_h,q_h) + \tau_{\mathrm{AL}}  \ell_{\mathrm{AL}}(v_h),
\end{align}
for all $(v_h,q_h) \in \Vh \times \Qh$, where $\tau_{\mathrm{AL}}>0$ is a user-defined parameter. 
{In the case $\Gamma_p=\emptyset$, the argument is slightly modified in view of the remark after Lemma~\ref{lem:mass-conservation}. Let $m_\Omega(r) \doteq |\Omega|^{-1}(r,1)_\Omega$ denote the mean value of $r \in L^2(\Omega)$. Since the conservation equation is tested only with functions in $\Qh \cap L^2_0(\Omega)$, taking $q_h=\dv{v_h}-m_\Omega(\dv{v_h}) \in \Qh \cap L^2_0(\Omega)$ shows that the solution $u_h$ satisfies \eqref{eq:aug-lag-formulation} with the following modified bilinear and linear forms:
\begin{align}
  c_{\mathrm{AL}}(u_h,v_h) &\doteq (\dv{u_h}-m_\Omega(\dv{u_h}),\dv{v_h})_{\Omega} + \xiqh \shpf(\dv{u_h},\dv{v_h}), \\
  \ell_{\mathrm{AL}}(v_h) &\doteq -(g-m_\Omega(g),\dv{v_h})_{\Omega}, \qquad \forall v_h \in \Vh.
\end{align}
In this case, the augmented Lagrangian formulation is then obtained by using $\Qh \cap L^2_0(\Omega)$ as pressure space in \eqref{eq:aug-lag-form}. 
}

In both cases, the solution of the augmented Lagrangian formulation coincides with the solution of the discrete problem~\eqref{eq:discrete-problem}. Moreover, this formulation is more amenable to operator preconditioning. The flux block is now elliptic in $H(\mathrm{div},\Omega)$ and the pressure Schur complement is spectrally equivalent to the mass matrix of the pressure space as $\tau_{\mathrm{AL}} \to \infty$. These blocks can be efficiently preconditioned using robust multigrid methods \cite{Arnold-NumerMathHeidelb-2000z}. We show experimentally the performance of the augmented Lagrangian formulation in Section~\ref{sec:num-results}, but we do not analyse solvers for this formulation in this work. We also note that the stability and convergence analysis in the previous sections can be readily extended to the augmented Lagrangian formulation, since the additional terms vanish for the solution of the discrete problem~\eqref{eq:discrete-problem}.

\section{Stabilisation} \label{sec:stab}
In this section, we discuss different stabilisation terms that satisfy Assumption~\ref{as:stabilisation-terms}.

\subsection{Cell-wise stabilisation}
Let us introduce a linear operator $\Hstr : \Vhc{\mathrm{*}}{} \to \Vhc{\mathrm{*}}{}$, for $\mathrm{*} \in \{\mathrm{d},0\}$. We propose the following stabilisation penalty terms:
\begingroup
\mathtoolsset{showonlyrefs=false}
\begin{subequations}\label{eq:stab-terms-cell}
\begin{align}
  \shuf(u,v) & \doteq  (u - \Hv(u),v - \Hv(v))_{{\Omega_h^{\rm cut}}}, \\
  \shpf(p,q) & \doteq (p - \Hq(p),q - \Hq(q))_{{\Omega_h^{\rm cut}}}, 
\end{align}
\end{subequations}
\endgroup
In this section, we discuss the choice of the $\Hstr$ operators used in the stabilisation terms. We consider two natural choices. Both definitions rely on the following discrete extension operator.
\subsection{Extension operator}
{We use the aggregated mesh $\Thag$ introduced in Section~\ref{sec:geom}, together with the associated root cell $T^{\mathrm{rt}}$ in each aggregate.}
In order to state (or analyse) the unfitted formulations proposed in this work, we define the extension operator $\extv : \Vh \to \Vh^-$, where 
\begin{equation}
  \Vh^{-} = \left\{ v_h \in H(\mathrm{div},\domin) : v_h|_T \in \mathcal{V}(T), \forall T \in \mathcal{T}_h^{\mathrm{ag}} \right\},
\end{equation}
i.e., a super-space of $\Vh$ functions that can have discontinuous normal components on $\mathcal{F}_{h}^{\mathrm{cut}}$.\footnote{The operator $\extv$ is a \emph{non-conforming} discrete extension operator; its image is not a subspace of $H(\mathrm{div},\Omega)$. This is different from the extension proposed in \cite{Badia2018-vw} for $H^1$-conforming spaces and used in \cite{Badia2022-zq} combined with ghost-penalty stabilisation.}
For each aggregate $T \in \mathcal{T}_{h}^{\mathrm{ag}}$, we identify the corresponding root cell $T^{\mathrm{rt}}$ and extract $v(T^{\mathrm{rt}})$, which is the restriction onto $T$ of a polynomial $p_{v}$ in $\mathcal{V}$. We thus define $\extv(v_h)|_T = p_{v}|_T$.
Therefore, $\extv$ extends the function aggregate-wise from the root cell to the cut cells in the aggregate. 
Taking $\Hv = \extv$ in the stabilisation terms, we obtain a method that is similar to the bulk ghost-penalty method proposed in~\cite{Badia2022-zq} for grad-conforming spaces. 
We proceed analogously for the pressure space, defining $\extq : \Qh \to \Qh$. However, since $\Qh$ is a discontinuous space, the extension still belongs to $\Qh$, due to Assumption~\ref{as:discrete-spaces}.

\subsection{Aggregate-wise projection}
For each aggregate $T \in \mathcal{T}_h^{\mathrm{ag}}$, we define $\Pv(v_h)|_T = \piulocal(v_h|_T)$, where $\piulocal$ is the $L^2$-projection onto the local flux space $\V(T)$:
\begin{alignat}{2}
  \piulocal(v_h) &\in \V(T) \ : \ 
  \int_T (v_h - \piulocal(v_h)) \cdot w &= 0, \qquad &\forall w \in \V(T).
\end{alignat}
If we take $\Hv = \Pv$, we recover a bulk ghost-penalty-like method~\cite{Burman2010-tf} on an agglomerated mesh~\cite{Badia2022-zq}. 

\subsection{Stability and approximability properties}
Next, we prove that the proposed stabilisation terms satisfy the stability and approximability requirements used in the abstract analysis above.
\begin{lemma}\label{lem:enhanced-stability}
  The stabilisation terms~\eqref{eq:stab-terms-cell} satisfy Assumption~\ref{as:stabilisation-terms}. Moreover, the pressure stabilisation satisfies Assumption~\ref{as:pressure-kernel-approx}.
\end{lemma}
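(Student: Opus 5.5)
Our plan is to verify the three ingredients of Assumption~\ref{as:stabilisation-terms} (norm equivalence, weak consistency, vanishing on constants) and then Assumption~\ref{as:pressure-kernel-approx}, treating $\Hstr \in \{\extv,\extq\}$ and $\Hstr \in \{\Pv,\Pq\}$ in parallel. Everything is local to aggregates: $\shuf$ and $\shpf$ split as sums over $T \in \Thagb$ of $\|w_h - \Hstr(w_h)\|^2_{L^2(T\cap\Omega_h^{\rm cut})}$. Both choices reproduce polynomials of the local space on each aggregate: if $w_h|_T \in \V(T)$ (resp.\ $\Q(T)$), then $\Hstr(w_h)|_T = w_h|_T$. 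In particular, constants are reproduced in the pressure case, so $\shpf(c,q_h)=0$.

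\textbf{Norm equivalence \eqref{eq:stab-assump-1}--\eqref{eq:stab-assump-2}.} The upper bound $\|w_h\|^2_{L^2(\Omega)} + \shuf(w_h,w_h) \lesssim \|w_h\|^2_{L^2(\Omega_h)}$ follows from $L^2$-stability of $\Hstr$ on each aggregate.
\begin{itemize}
\item For the projection this stability is immediate.
\item For the extension we use that a polynomial on the root cell $T^{\mathrm{rt}}$ extended to an aggregate of diameter $\lesssim h$ satisfies $\|p\|_{L^2(T)} \lesssim \|p\|_{L^2(T^{\mathrm{rt}})}$. This holds by equivalence of norms on finite-dimensional polynomial spaces plus scaling, using shape regularity and the bounded number of cells per aggregate.
\end{itemize}
For the lower bound, on each cut cell $K$ in an aggregate $T$ we write
\[
\|w_h\|_{L^2(K)} \le \|w_h-\Hstr(w_h)\|_{L^2(K)} + \|\Hstr(w_h)\|_{L^2(K)}.
\]
For the extension, $\|\Hstr(w_h)\|_{L^2(K)} \lesssim \|w_h\|_{L^2(T^{\mathrm{rt}})}$, and $T^{\mathrm{rt}} \subset \Omega$, so this is controlled by $\|w_h\|_{L^2(\Omega)}$. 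For the projection, $\Hstr(w_h)|_T$ is a single polynomial on $T$, and $|T\cap\Omega| \gtrsim |T|$ since $T$ contains an interior root. Hence
\[
\|\Hstr(w_h)\|_{L^2(T)} \lesssim \|\Hstr(w_h)\|_{L^2(T^{\mathrm{rt}})} \le \|w_h\|_{L^2(T^{\mathrm{rt}})} + \|w_h-\Hstr(w_h)\|_{L^2(T^{\mathrm{rt}})}.
\]
The last term lives on an interior cell, which is not part of $\Omega_h^{\rm cut}$ where $\shuf$ is evaluated. We would handle it by bounding $\|w_h-\Pv w_h\|_{L^2(T)}$, which is the best approximation on $T$, by $\|w_h - \tilde p\|_{L^2(T)}$ with $\tilde p$ the root polynomial extended. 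This is nonzero only on the cut cells, so it reduces to the extension case plus a comparison. The step needs care, and we expect the cleanest route to be proving the projection case via the extension case in this way.

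\textbf{Weak consistency \eqref{eq:weak-consistency}.} By Cauchy--Schwarz and the upper stability bound,
\[
\shuf(\Iv w, v_h) \lesssim \|\Iv w - \Hv \Iv w\|_{L^2(\Omega_h^{\rm cut})}\,\|v_h\|_{L^2(\Omega_h)}.
\]
On each aggregate $T$, let $p_T$ be a polynomial in $[\mathbb{P}_{k_u}]^d \subset \V$ approximating $w$ on the aggregate (for instance an averaged Taylor polynomial on a ball containing $T$). Since $p_T$ is reproduced by $\Hv$, the error is $(I-\Hv)(\Iv w - p_T)$. By the stability above this is bounded by $\|\Iv w - p_T\|_{L^2(T)} \le \|\Iv w - w\|_{L^2(T)} + \|w-p_T\|_{L^2(T)} \lesssim h^{s}|w|_{H^s(\text{patch})}$ via Bramble--Hilbert and the local interpolation estimates. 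Summing with finite overlap in $\domext$ gives \eqref{eq:weak-consistency-1}. The pressure case is identical, which gives \eqref{eq:weak-consistency-2}.

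\textbf{Assumption~\ref{as:pressure-kernel-approx}.} $\mathcal{K}_h^p$ contains all $q_h$ that are aggregate-wise polynomials in $\Q$, since for these $q_h - \Hq q_h = 0$. For given $\tilde r$ we take $w_h|_T = $ the $L^2(T)$-projection of $\tilde r$ onto $\mathbb{P}_{k_p}(T)$ on each aggregate. Bramble--Hilbert on aggregates of diameter $\lesssim h$ yields $\|\tilde r - w_h\|_{L^2(\Omega_h)} \lesssim h^t\|\tilde r\|_{H^t(\domext)}$.

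The main obstacle is the lower bound in the norm equivalence for the projection variant, where the residual on the root cell must be controlled. The plan there is to argue through the extension variant as described.
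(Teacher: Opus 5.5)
Everything except one step matches the paper's proof. The extension-case bounds, weak consistency via polynomial reproduction plus Bramble--Hilbert, vanishing on constants, and the aggregate-wise polynomial construction for Assumption~\ref{as:pressure-kernel-approx} all go through. The exception is the lower bound in \eqref{eq:stab-assump-1}--\eqref{eq:stab-assump-2} for the projection choice $\Hstr\in\{\Pv,\Pq\}$. You flag it as the obstacle, but the route you sketch is circular. Write $T^{\mathrm{c}}\doteq T\cap\Omega_h^{\mathrm{cut}}$, let $\tilde p$ be the root polynomial, and set $e\doteq w_h-\Pv w_h$ and $q\doteq \Pv w_h-\tilde p\in\V(T)$. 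The best-approximation step gives $\|e\|_{L^2(T^{\mathrm{rt}})}\le\|w_h-\tilde p\|_{L^2(T^{\mathrm{c}})}$, which is the \emph{extension} stabilisation, not the one you have. To compare the two you must write $\|w_h-\tilde p\|_{L^2(T^{\mathrm{c}})}\le\|e\|_{L^2(T^{\mathrm{c}})}+\|q\|_{L^2(T^{\mathrm{c}})}$. Your only control of the polynomial $q$ is through the root cell, $\|q\|_{L^2(T)}\lesssim\|q\|_{L^2(T^{\mathrm{rt}})}=\|e\|_{L^2(T^{\mathrm{rt}})}$, and that is exactly the quantity you started from. Splitting instead as $\|w_h\|+\|\tilde p\|$ needs $\|w_h\|_{L^2(T^{\mathrm{c}})}$, which is the target. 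So triangle inequalities alone do not close the argument.

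The missing idea is that, for an orthogonal projection, the root-cell residual is controlled by the cut-cell residual. The paper gets this by a compactness/scaling argument. On an aggregate mapped to unit size, $w\mapsto\|w\|_{L^2(T^{\mathrm{rt}})}+\|w-\Pv w\|_{L^2(T^{\mathrm{c}})}$ is a norm on the local discrete space. Indeed, if it vanishes, then $w=0$ on $T^{\mathrm{rt}}$ and $w=\Pv w$ on $T^{\mathrm{c}}$, so orthogonality gives $(\Pv w,v)_{T^{\mathrm{rt}}}=0$ for all $v\in\V(T)$, hence $\Pv w=0$ and $w=0$. Equivalence of norms in finite dimensions plus scaling then concludes. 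Within your own framework there is a direct fix: test the orthogonality of $e$ with $q$. Since $e=-q$ on $T^{\mathrm{rt}}$,
\[
\|q\|^2_{L^2(T^{\mathrm{rt}})}=(e,q)_{T^{\mathrm{c}}}\le\|e\|_{L^2(T^{\mathrm{c}})}\|q\|_{L^2(T)}\lesssim\|e\|_{L^2(T^{\mathrm{c}})}\|q\|_{L^2(T^{\mathrm{rt}})},
\]
so $\|e\|_{L^2(T^{\mathrm{rt}})}=\|q\|_{L^2(T^{\mathrm{rt}})}\lesssim\|e\|_{L^2(T^{\mathrm{c}})}$. Inserted into your chain, this yields $\|w_h\|^2_{L^2(T)}\lesssim\|w_h\|^2_{L^2(T^{\mathrm{rt}})}+\|e\|^2_{L^2(T^{\mathrm{c}})}$. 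It uses only the polynomial extension bound you already rely on, and it avoids the question of uniformity of a compactness constant over aggregate shapes.
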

\begin{proof}
First, we prove the enhanced stability results. For the pressure stabilisation, the proof for $\Hq = \extq$ is a direct consequence of the stability of the aggregate-wise extension~\cite{Badia2018-vw}, i.e., $\|\extq p_h \|_{\domact} \lesssim \|p_h\|_{\domin}$, the fact that $p_h - \extq p_h = 0$ in $\domin$ and the triangle inequality. The result for the $L^2$-projection, i.e., $\Hq = \Pq$, can be proved as in \cite{Burman2010-tf}, by mapping the aggregate onto an aggregate of unit diameter, checking that the right-hand side is a norm and using the equivalence of discrete norms together with a scaling argument. The corresponding estimate for the flux stabilisation follows by the same argument.

Next, we prove the weak consistency for $\shuf$. Let $w \in [H^{r}(\domext)]^d$. We note that the interpolant $\Iv$ can be defined in a cell-wise manner, i.e., $\Iv(w)|_T = \IvT(w|_T)$, where $\IvT$ is the local interpolant defined on the cell $T \in \Th$. We can also define $\IvT$ on aggregates $T \in \Thag$ by interpolating on the cells belonging to the aggregate. On the other hand, the operator $\Hv$ is already defined aggregate-wise, so one can write $\Hv(\Iv(w))|_T = \HvT(\Iv(w)|_T)$ for $T \in \Thag$. 
 We consider $\xi_h \in \mathbb{P}_{k_u}(\Thag)$; thus, $\HvT(\xi_h) = \xi_h$ for each aggregate $T \in \Thag$. We have:
\begin{align*}
  \shuf{(\Iv(w),v_h)} &= \sum_{T \in \Thag} (\IvT(w) - \HvT(\IvT(w)),v_h - \HvT(v_h))_T \\ 
  & = \sum_{T \in \Thag} (\IvT(w - \xi_h) - \HvT(\Iv(w-\xi_h)),v_h - \HvT(v_h))_T
  \\
  & \lesssim \sum_{T \in \Thag} \| w - \xi_h \|_{\dimnor{L^2(T)}} \| v_h \|_{\dimnor{L^2(T)}},
\end{align*}    
 for any $v_h \in \Vh$, 
where we have used the stability of $\Hv$ and $\Iv$ in the last inequality. Owing to a Bramble--Hilbert argument, we obtain the desired result. The bounds corresponding to $\shpf$ can be proved analogously, and the fact that the pressure stabilisation vanishes on constants is straightforward by definition of the extension.

{To prove Assumption~\ref{as:pressure-kernel-approx}, let $\tilde r \in H^t(\domext)$ with $0 \leq t \leq k_p+1$. By a Bramble--Hilbert argument on the aggregates, and using that the aggregate diameters are comparable to $h$, there exists $w_h \in \mathbb{P}_{k_p}(\Thag) \subset \Qh$ such that}
\begin{equation*}
{  \|\tilde r-w_h\|_{L^2(\Omega_h)} \lesssim h^t \|\tilde r\|_{H^t(\domext)}.}
\end{equation*}
{It remains to show that $w_h \in \mathcal{K}_h^p$. Since $w_h$ is polynomial on each aggregate, both choices $\Hq=\extq$ and $\Hq=\Pq$ reproduce $w_h$ exactly on every aggregate. Hence $w_h-\Hq(w_h)=0$ in $\Omega_h^{\mathrm{cut}}$, and for every $\phi_h \in \Qh$ we obtain}
\begin{equation*}
{  \shpf(w_h,\phi_h) = (w_h-\Hq(w_h),\phi_h-\Hq(\phi_h))_{\Omega_h^{\mathrm{cut}}} = 0.}
\end{equation*}
{Thus $w_h \in \mathcal{K}_h^p$, and Assumption~\ref{as:pressure-kernel-approx} follows.}
\end{proof}

\subsection{Face-based stabilisation}\label{sec:face-stab}
Recall that $\mathcal{F}_{h}^{\mathrm{cut}}$ contains all interior facets that belong to cut cells. A subset of these facets, denoted by $\Fstab$, is used in the following face-based stabilisation forms:
\begingroup
\mathtoolsset{showonlyrefs=false}
\begin{subequations}\label{eq:shface}
\begin{align}
\shuf(u,v) &= \sum_{F \in \Fstab}  \sum_{j=0}^{k_u}  h^{2j + 1}  ([D_n^j u],[D_n^j v])_F, \label{eq:shuf}\\
\shpf (p,q) &= \sum_{F \in \Fstab}  \sum_{j=0}^{k_p}  h^{2j + 1}  ([D^j p],[D^j q])_F, \label{eq:shpf}
\end{align}
 \end{subequations}
\endgroup
Here, $D^j$ denotes the generalized derivative of order $j$ and $\jump{D_n^j v}$ denotes the jump of the normal derivative of order $j$ across the face $F$, with $\jump{D_n^0 v}=\jump{v}$ and $n$ the unit normal associated with $F$. The full jump $\jump{D^j q}$ also includes jumps of derivatives in directions orthogonal to $n$. The constants $k_u$ and $k_p$ denote the degrees of the polynomials used in the discrete spaces $\Vh$ and $\Qh$, respectively. For an alternative implementation of the standard ghost-penalty form~\eqref{eq:shface}, which is particularly convenient for higher-order polynomial approximations, we refer to~\cite{preuss2018higher}. See also \cite[Chapter 4]{acta25} for other equivalent stabilisation terms.
{In the simplest case, $\Fstab = \mathcal{F}_{h}^{\mathrm{cut}}$. To reduce the amount of stabilisation, we instead define $\Fstab$ from the aggregated mesh introduced in Section~\ref{sec:geom}. For each non-trivial aggregate $A \in \Thagb$ with root cell $T^{\mathrm{rt}}$, we choose a connected set of facets $\mathcal{F}_A \subset \mathcal{F}_{h}^{\mathrm{cut}}$ such that every cell in $A$ is connected to $T^{\mathrm{rt}}$ through a path of facets in $\mathcal{F}_A$, and we set $\Fstab \doteq \bigcup_{A \in \Thagb} \mathcal{F}_A$. We refer to \cite[Algorithm~1 and Figure~1]{LarZah23} and \cite[Figure~4.1]{acta25} for illustrations.} We now show that Assumption~\ref{as:stabilisation-terms} holds.
\begin{lemma}\label{lem:enhanced-stability-FGP}
  The stabilisation terms~\eqref{eq:shface} satisfy Assumption~\ref{as:stabilisation-terms}. {Moreover, the pressure stabilisation satisfies Assumption~\ref{as:pressure-kernel-approx}.}
\end{lemma}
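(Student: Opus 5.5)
The plan is to verify, in order, the norm equivalences \eqref{eq:stab-assump-1}--\eqref{eq:stab-assump-2}, the weak consistency \eqref{eq:weak-consistency}, the vanishing on constants, and finally Assumption~\ref{as:pressure-kernel-approx}. Throughout I will work aggregate by aggregate, using that each non-trivial aggregate $A \in \Thagb$ has diameter $\lesssim h$, contains a root cell $T^{\mathrm{rt}} \subset \Omega$, and that $\mathcal{F}_A$ connects every cell of $A$ to $T^{\mathrm{rt}}$ through a path of uniformly bounded length (by shape regularity and the diameter bound).

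\emph{Norm equivalence.} The upper bound $\|w_h\|^2_{L^2(\Omega)} + \shuf(w_h,w_h) \lesssim \|w_h\|^2_{L^2(\Omega_h)}$ follows from discrete trace and inverse inequalities on the two cells sharing each $F \in \Fstab$. Each such cell belongs to the background mesh, so these inequalities are cut-independent. Concretely, $h^{2j+1}\|[D^j_n w_h]\|_F^2 \lesssim \|w_h\|^2_{T_1\cup T_2}$. For the lower bound I use the standard ghost-penalty lemma: for two facet-neighbours $T_1,T_2$ sharing $F$ and a piecewise polynomial $v$ of degree $k$,
\begin{equation*}
\|v\|^2_{L^2(T_2)} \lesssim \|v\|^2_{L^2(T_1)} + \sum_{j=0}^{k} h^{2j+1}\|[D_n^j v]\|^2_{F}.
\end{equation*}
This holds because the polynomial extension of $v|_{T_1}$ differs from $v|_{T_2}$ by a polynomial determined by its normal-derivative jumps on $F$, via a Taylor expansion in the normal direction; see \cite{acta25}. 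For the flux, only normal derivatives of the full vector jump are penalised, which is exactly what the lemma requires. Iterating along the paths in $\mathcal{F}_A$ (bounded length) gives $\|w_h\|^2_{L^2(A)} \lesssim \|w_h\|^2_{L^2(T^{\mathrm{rt}})} + \shuf(w_h,w_h)$. Since $T^{\mathrm{rt}} \subset \Omega$, summing over aggregates and adding interior cells yields \eqref{eq:stab-assump-1}. The pressure case is identical, using the full jumps $[D^j q]$, which dominate the normal ones.

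\emph{Weak consistency.} I intend to insert $w \in [H^s(\domext)]^d$. For $s > k_u + 1/2$ (so that traces of derivatives up to order $k_u$ make sense) the jumps $[D_n^j w]$ vanish on interior facets. Then
\begin{equation*}
\shuf(\Iv(w),v_h) = \shuf(\Iv(w)-w, v_h) \le \shuf(\Iv(w)-w,\Iv(w)-w)^{1/2}\shuf(v_h,v_h)^{1/2}.
\end{equation*}
The first factor I will bound by continuous trace inequalities on each cell, $h^{2j+1}\|D^j(w-\Iv w)\|_F^2 \lesssim h^{2j}\|D^j(w-\Iv w)\|_T^2 + h^{2j+2}\|D^{j+1}(w-\Iv w)\|_T^2$, together with local interpolation estimates; this gives $h^{s}\|w\|_{H^s}$.

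\emph{The main obstacle} is the low-regularity range $0 \le s \le k_u+1/2$, where the jumps of $w$ need not vanish. There I would avoid traces of $w$ altogether. I would write $\Iv(w) = \Iv(w-\xi_h) + \Iv(\xi_h)$ with $\xi_h$ a polynomial of degree $k_u$ on the patch $T_1\cup T_2$ of each facet (the facet patches have bounded overlap). Then $[D_n^j \Iv(\xi_h)]=[D_n^j\xi_h]=0$, since $\Iv$ reproduces $[\mathbb{P}_{k_u}]^d \subset \V$, so this term drops. For the remaining term, the $L^2$-continuity of the form on discrete functions gives the bound $\|\Iv(w-\xi_h)\|_{L^2(T_1\cup T_2)}$, and local stability of the interpolant together with Bramble--Hilbert on the patch then gives $h^s\|w\|_{H^s(T_1\cup T_2)}$. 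This argument works for all $s$ in the range provided $\Iv$ is $L^2$-stable locally for the extended function; if it is not, I would replace it by a quasi-interpolant as in the cell-wise proof of Lemma~\ref{lem:enhanced-stability}. The pressure and divergence terms are handled the same way.

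\emph{Constants and kernel approximation.} Constants have zero jumps and zero derivatives, so $\shpf(c,q_h)=0$. For Assumption~\ref{as:pressure-kernel-approx}, I will take $w_h \in \mathbb{P}_{k_p}(\Thag)$ from Bramble--Hilbert on each aggregate, as in Lemma~\ref{lem:enhanced-stability}. Every facet in $\Fstab$ lies inside a single aggregate, because $\mathcal{F}_A$ consists of facets joining cells of $A$. Hence all jumps $[D^j w_h]$ on $\Fstab$ vanish, so $w_h \in \mathcal{K}_h^p$ and the approximation bound follows.
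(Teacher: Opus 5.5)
Your norm equivalences, the vanishing on constants and your proof of Assumption~\ref{as:pressure-kernel-approx} follow the paper: trace/inverse inequalities give the upper bound, the Taylor-expansion ghost-penalty estimate chained along $\mathcal{F}_A$ to the root cell gives the lower bound, and Bramble--Hilbert on aggregates makes all jumps on $\Fstab$ vanish.

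\emph{Weak consistency.} Here you genuinely differ. The paper takes integer $t$ and splits by derivative order. For $j<t$ it replaces $[D^j\Iq(r)]$ by $[D^j(\Iq(r)-r)]$ and uses the continuous trace inequality with local interpolation estimates. For $j\ge t$ it uses discrete trace/inverse inequalities down to $h^{2t}\|D^t\Iq(r)\|^2$, plus $H^t$-stability of the interpolant. Your case (a) is this argument at $t=k_u+1$.

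Your case (b) instead subtracts a polynomial $\xi_F$ on each facet patch and uses that $\Iv$ reproduces it. It then needs only discrete trace/inverse inequalities, local $L^2$-stability of $\Iv$ and Bramble--Hilbert. This is the argument the paper uses for the cell-wise stabilisation in Lemma~\ref{lem:enhanced-stability}, moved to facet patches. It never takes traces of $w$, and nothing in it uses $s\le k_u+1/2$. So it covers every real $s\in[0,k_u+1]$ and makes the case split and case (a) unnecessary.

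The price is that $\Iv$ must be a locally $L^2$-bounded projection, with $\xi_F$ taken on the patch where $\Iv$ is local. This holds for local bounded commuting projections but not for canonical interpolants. The paper's route needs only local error estimates, but is confined to integer $t$.

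Your fallback of switching to a quasi-interpolant is not admissible. Assumption~\ref{as:stabilisation-terms} concerns $\Iv$ itself, the operator whose commutation with the divergence is used in the error analysis. Keep $\Iv$ and use instead $\|\Iv(w-\xi_F)\|_{L^2(\omega_F)}\lesssim\sum_{m\le s}h^m|w-\xi_F|_{H^m(\omega_F)}$ wherever $\Iv$ is defined. Bramble--Hilbert then gives the same $h^s$ bound.

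\emph{Flux lower bound.} Neither you nor the paper justifies this step. The ghost-penalty estimate you quote needs normal-derivative jumps up to the full polynomial degree. But \eqref{eq:shuf} stops at $j=k_u$, while $\mathbb{RT}_{k_u}$ fields have degree $k_u+1$. So ``exactly what the lemma requires'' does not hold for Raviart--Thomas spaces.

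On simplices this is repairable. The difference $p$ between $v|_{T_2}$ and the polynomial extension of $v|_{T_1}$ lies in $\mathbb{RT}_{k_u}=[\mathbb{P}_{k_u}]^d+x\,\mathbb{P}_{k_u}$. If $\partial_n^j p=0$ on $F\subset\{x_d=0\}$ for all $j\le k_u$, then $p=x_d^{k_u+1}c$ with $c$ a constant vector. Such a field lies in $\mathbb{RT}_{k_u}$ only if $c=0$, because its top-degree part must be of the form $x\,\tilde q(x)$. Hence the truncated facet seminorm is a norm on the local space, and scaling yields your inequality.

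For tensor-product Raviart--Thomas spaces the estimate genuinely fails with the aggregate-based $\Fstab$. Take $\mathbb{RT}_0$ on a Cartesian mesh and a cut cell $T=[0,h]^2$ with $T\cap\Omega=[0,\epsilon]\times[0,h]$. Suppose its only facet in $\Fstab$ is $\{x_1=0\}$ and $\{x_1=h\}\subset\partial\Omega_h$. Set $u_h=(b\,x_1,0)$ on $T$ and $u_h=0$ elsewhere. Then $u_h\in\Vh$ and $\shuf(u_h,u_h)=0$, yet $\|u_h\|_{L^2(\Omega)}^2=(\epsilon/h)^3\|u_h\|_{L^2(\Omega_h)}^2$. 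So the flux part of the lemma should be restricted to simplicial or complete-polynomial flux spaces (as in the paper's face-based experiments), or the sum in \eqref{eq:shuf} should run to the actual polynomial degree.
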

\begin{proof}
For $r_h \in\Qh$, the estimate $\| r_h \|_{L^2(\Omega_h)}^2 \lesssim \| r_h \|^2_{L^2(\Omega)} + \shpf(r_h,r_h)$ follows from a Taylor expansion argument for polynomial functions around points on connecting facets; see for example~\cite{acta25},~\cite[Lemma 3.8]{Hansbo2014-bd} and \cite[Lemma 5.1]{Massing-JSciComput-2014r}. 
For the reverse inequality, we use the standard element-wise trace inequality followed by the standard inverse inequality~\cite{BrennerScott} to obtain
	\begin{align} \label{eq:spterm}
		\shpf(r_h,r_h)  &
                \lesssim \sum_{T \in \mathcal{T}_{h}} \| r_h \|_{L^2(T)}^2.
	\end{align}
Consequently, $\| r_h \|^2_{L^2(\Omega)} + \shpf(r_h,r_h)\lesssim  \| r_h \|_{L^2(\Omega_h)}^2$. 
Let $r \in H^t(\Omega^{\mathrm{ext}})$ with $0 \le t \le k_p+1$ (integer), and let $q_h \in \Qh$.  
Applying the Cauchy--Schwarz inequality and~\eqref{eq:spterm} gives
\begin{align}
\shpf(\Iq(r),q_h) 
&\le \shpf(\Iq(r),\Iq(r))^{1/2} \, \shpf(q_h,q_h)^{1/2} 
\lesssim \shpf(\Iq(r),\Iq(r))^{1/2} \, \|q_h\|_{L^2(\Omega_h)}.
\end{align}
To bound $\shpf(\Iq(r),\Iq(r))$, consider an arbitrary $r_h \in \Qh$ and split the sum over derivatives into low- and high-order terms relative to the regularity $t$:
\begin{align}
\shpf(r_h,r_h) 
= \sum_{F \in \Fstab} \sum_{j=0}^{t-1} h^{2j+1} \| [D^j r_h] \|_{L^2(F)}^2
+ \sum_{F \in \Fstab} \sum_{j=t}^{k_p} h^{2j+1} \| [D^j r_h] \|_{L^2(F)}^2.
\end{align}
For the high-order terms $j \ge t$, apply standard trace and inverse inequalities~\cite{BrennerScott} to bound them in terms of $\| D^{t} r_h \|_{L^2(T)}$:
\begin{align}
\shpf(r_h,r_h) 
\lesssim \sum_{F \in \Fstab} \sum_{j=0}^{t-1} h^{2j+1} \| [D^j r_h] \|_{L^2(F)}^2
+ h^{2t} \sum_{T \in \mathcal{T}_h} \| D^t r_h \|_{L^2(T)}^2.
\end{align}
Now set $r_h = \Iq(r)$ and subtract $r$ for the low-order terms and use that 
\begin{equation}
[D^j \Iq(r)] = [D^j (\Iq(r)-r)] + [D^j r] = [D^j (\Iq(r)-r)], \quad j < t,
\end{equation}
since $r \in H^t(\Omega^{\mathrm{ext}})$ implies $[D^j r] = 0$ for $j=0,\cdots,t -1$.
Using the standard trace inequality on each face $F$:
\begin{equation}
\| [D^j (\Iq(r)-r)] \|_{L^2(F)}^2 
\lesssim h^{-1} \| D^j (\Iq(r)-r) \|_{L^2(T)}^2 + h \| D^{j+1} (\Iq(r)-r) \|_{L^2(T)}^2
\end{equation}
and standard interpolation estimates
\begin{equation}
\| D^j (\Iq(r)-r) \|_{L^2(T)} \lesssim h^{t-j} \, |r|_{H^t(T)}, 
\qquad
\| D^{j+1} (\Iq(r)-r) \|_{L^2(T)} \lesssim h^{t-j-1} \, |r|_{H^t(T)},
\end{equation}
we readily obtain
\begin{align}
h^{2j+1} \| [D^j (\Iq(r)-r)] \|_{L^2(F)}^2 \lesssim h^{2t} |r|_{H^t(T)}^2.
\end{align}
Summing over all faces $F \in \Fstab$ and adding the high-order term gives
\begin{align}
\shpf(\Iq(r),\Iq(r)) 
&\lesssim h^{2t} \| r \|_{H^t(\Omega^{\mathrm{ext}})}^2 
+ h^{2t} \sum_{T \in \mathcal{T}_h} \| D^t \Iq(r) \|_{L^2(T)}^2
\lesssim h^{2t} \| r \|_{H^t(\Omega^{\mathrm{ext}})}^2,
\end{align}
where in the last inequality we used the stability of the interpolant
\begin{equation}
\sum_{T \in \mathcal{T}_h} \| D^t \Iq(r) \|_{L^2(T)}^2 
\lesssim \sum_{T \in \mathcal{T}_h} \| D^t (\Iq(r)-r) \|_{L^2(T)}^2 + \| D^t r \|_{L^2(T)}^2
\lesssim \| r \|_{H^t(\Omega^{\mathrm{ext}})}^2.
\end{equation}
Hence we conclude the weak consistency bound:
\begin{align}
\shpf(\Iq(r),q_h) \lesssim h^t \| r \|_{H^t(\Omega^{\mathrm{ext}})} \, \| q_h \|_{L^2(\Omega_h)}, 
\qquad 0 \le t \le k_p+1.
\end{align}
Finally, $\shpf(c,q_h)=0$ for every constant $c$ and every $q_h \in \Qh$, since all jumps of a constant and of its derivatives vanish identically.
{The proof of Assumption~\ref{as:pressure-kernel-approx} follows the same lines as above. Let $\tilde r \in H^t(\domext)$ with $0 \leq t \leq k_p+1$. Since the patches induced by $\Fstab$ are the aggregates in $\Thagb$, a Bramble--Hilbert argument on these aggregates yields $w_h \in \Qh$, polynomial of degree at most $k_p$ on each patch, such that $\|\tilde r-w_h\|_{L^2(\Omega_h)} \lesssim h^t \|\tilde r\|_{H^t(\domext)}$. Since $w_h$ is given on each patch by a single polynomial, all jumps $[D^j w_h]$ vanish on every facet in $\Fstab$ for $j=0,\dots,k_p$, and therefore $\shpf(w_h,\phi_h)=0$ for every $\phi_h \in \Qh$. Hence $w_h \in \mathcal{K}_h^p$, and Assumption~\ref{as:pressure-kernel-approx} follows.}
The corresponding estimate for $\uh$ follows by the same argument, applied component-wise to each component of $\uh$. 
\end{proof}
\section{Numerical experiments}\label{sec:num-results}
We present a set of numerical experiments to illustrate the performance of the proposed stabilised unfitted mixed finite element methods for the Darcy problem. We consider bulk-based and face-based stabilisation techniques, as introduced in Section~\ref{sec:stab}, and study their behaviour for different cut configurations, boundary conditions and numerical parameter regimes.

\subsection{Bulk-based stabilisation methods} \label{subsec:num-results-bulk}
In this subsection, we compare the unstabilised method (\texttt{std}) with the bulk ghost-penalty stabilised method (\texttt{BGP}), which relies on the cell-wise stabilisation terms \eqref{eq:stab-terms-cell} in combination with the aggregate-wise $L^2$-projection operators $\Pv$ and $\Pq$. We also consider the augmented Lagrangian formulation \eqref{eq:aug-lag-form} of the latter stabilisation method (\texttt{AL-BGP}). Table~\ref{tab:methods} summarises the different numerical methods considered in this subsection.
\begin{table}[h] \caption{Summary of the bulk-stabilisation methods.} \label{tab:methods}
\begin{tabular}{@{}lllll@{}}
\toprule
Method & Discrete Problem & Stabilisation terms & Stabilisation type & Symbol \\ \midrule
\texttt{std} & Problem \eqref{eq:discrete-problem} & None ($\tau_{\bullet}=0$) & None & \hspace{12pt} \includegraphics[trim=10pt 13pt 10pt 15pt,clip]{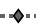} \\ 
\texttt{BGP} & Problem \eqref{eq:discrete-problem} & Bulk stabilisation \eqref{eq:stab-terms-cell} & $\Hv=\Pv,\Hq=\Pq$ & \hspace{12pt} \includegraphics[trim=10pt 13pt 10pt 15pt,clip]{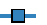}        \\
\texttt{AL-BGP} & Problem \eqref{eq:aug-lag-form} & Bulk stabilisation \eqref{eq:stab-terms-cell} & $\Hv=\Pv,\Hq=\Pq$ & \hspace{12pt} \includegraphics[trim=10pt 13pt 10pt 15pt,clip]{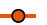} \\ 
\bottomrule
\end{tabular}
\end{table}
We consider a square physical domain $\Omega = [-h_{\mathrm{cut}}-L/2,L/2+h_{\mathrm{cut}}]^2$ embedded in the background domain $\Omega_h^{\mathrm{bg}} = [-h-L/2,h+L/2]^2$, consisting of a uniform Cartesian mesh with $n\times n$ quadrilateral cells of size $h = L/(n-2)$. Here, $h_{\mathrm{cut}} \in (0,h)$ is the parameter controlling the cut-cell size and the inner-domain length is fixed at $L=1.0$. To add the cell-wise stabilisation for the \texttt{BGP} and \texttt{AL-BGP} methods, we create an aggregated mesh $\mathcal{T}_h^{\mathrm{ag}}$ using the aggregation parameter $\delta = 1.0$.
For the discrete spaces, 
we use the pair $\mathbb{RT}_0 \times \mathbb{Q}_0$. We consider pure pressure boundary conditions ($\Gamma = \boup$), pure flux boundary conditions ($\Gamma = \bouu$) and mixed boundary conditions. This allows us to analyse the behaviour of the methods with respect to the weak imposition of the flux boundary conditions and its numerical penalty parameter $\gamma$ separately. Recall that for pure flux boundary conditions ($\Gamma = \bouu$), the discrete pressure space is $\Qh \cap L^2(\Omega)$.

All numerical results presented in this subsection have been obtained using the open-source numerical framework Gridap \cite{Badia2020,Verdugo2022} and its sub-package GridapEmbedded \cite{GridapEmbeddedRepo}, both written in \texttt{Julia}. The software (available in \cite{PaperUnfittedFEMDarcy}) was run on the supercomputer Gadi, hosted by the Australian National Computational Infrastructure Agency (NCI). For efficiency reasons, the condition numbers reported in this subsection have been computed in the 1-norm using \texttt{Julia}'s \texttt{cond()} method.

\subsubsection{$h$-convergence on a cut square}
To study the $h$-convergence of the different stabilisation methods, we vary the mesh refinement level via $n$ while keeping the cut-cell length ratio $h_{\mathrm{cut}}/h$ constant. We consider two different cut configurations: a \emph{large cut} configuration with $h_{\mathrm{cut}}/h =5.0\cdot10^{-1}$ and a \emph{small cut} configuration with $h_{\mathrm{cut}}/h = 5.0\cdot10^{-7}$. To compute the errors analytically, we employ the following 2-D manufactured solution:
\begin{equation} \label{eq:manufactured.sol.smooth}
    p(x,y) = \sin(\pi x)-\sin(\pi y), \text{ and } {u}(x,y) = (x+\sin(\pi y),-y + \sin(\pi x)).
\end{equation}
In Figure~\ref{fig:hconvergence}, we show that, for both cut configurations, the flux and pressure converge at the optimal rates. For moderate stabilisation parameters, mass is conserved up to machine precision, as measured by the $L^2(\Omega)$-error in the divergence of the flux. Larger values of the stabilisation parameters $\xivh$ and $\xiqh$ weaken this control, whereas the \texttt{AL-BGP} method provides additional control over mass conservation.

\begin{figure}[h] \centering 
    {
        \includegraphics[height=0.052\textheight]{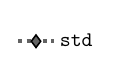}
    }\hspace{-0.7em}
    {
        \includegraphics[height=0.052\textheight]{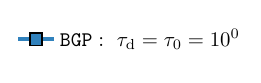}
    }\hspace{-0.7em}
    {
        \includegraphics[height=0.052\textheight]{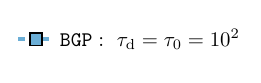}
    }\hspace{-0.7em}
    {
        \includegraphics[height=0.052\textheight]{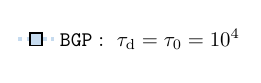}
    }\\\vspace{-1.2em}
    {
        \includegraphics[height=0.052\textheight]{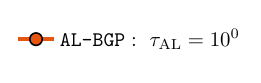}
    }\hspace{-1.0em}
    {
        \includegraphics[height=0.052\textheight]{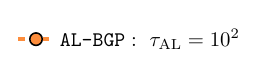}
    }\hspace{-1.0em}
    {
        \includegraphics[height=0.052\textheight]{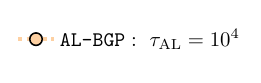}
    }\\\vspace{-1.0em}
     {
        \raisebox{0.1\textwidth}{\rotatebox[origin=t]{90}{\Small \qquad \qquad \quad $h_{\mathrm{cut}}/h = 5.0\cdot10^{-1}$}}   
     }\hspace{-0.7em} 
     {
        \includegraphics[height=0.33\textwidth]{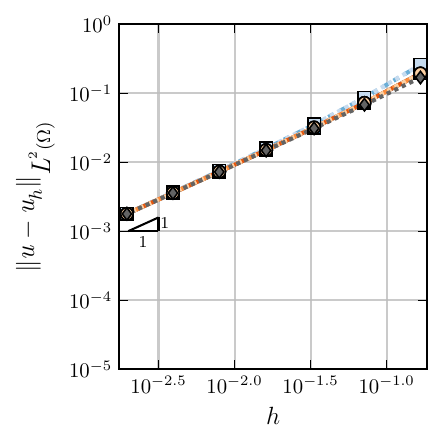}
     }\hspace{-0.9em}
     {
        \includegraphics[height=0.33\textwidth]{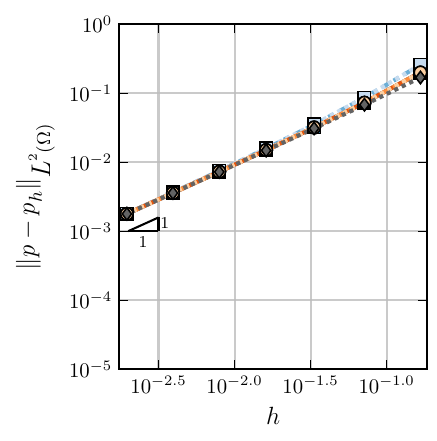}
     }\hspace{-0.9em}
     {
        \includegraphics[height=0.33\textwidth]{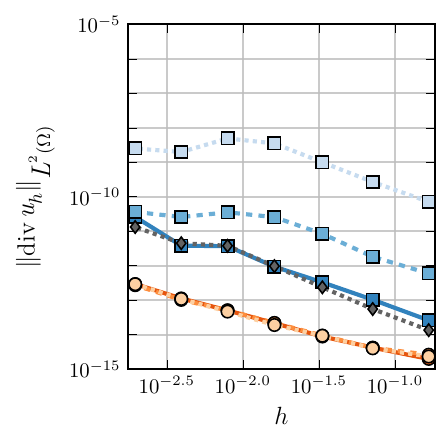}
     }
      \\\vspace{-1.2em}
    {
        \raisebox{0.1\textwidth}{\rotatebox[origin=t]{90}{\Small \qquad \qquad \quad $h_{\mathrm{cut}}/h = 5.0\cdot10^{-7}$}}   
    }\hspace{-0.7em} 
     {
        \includegraphics[height=0.33\textwidth]{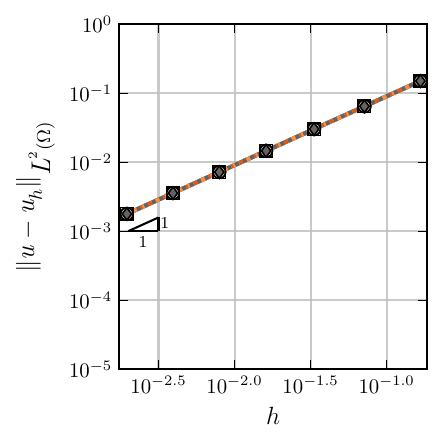}
     }\hspace{-0.9em}
     {
        \includegraphics[height=0.33\textwidth]{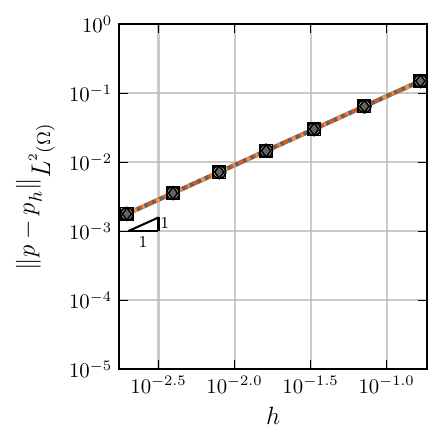}
     }\hspace{-0.9em}
     {
        \includegraphics[height=0.33\textwidth]{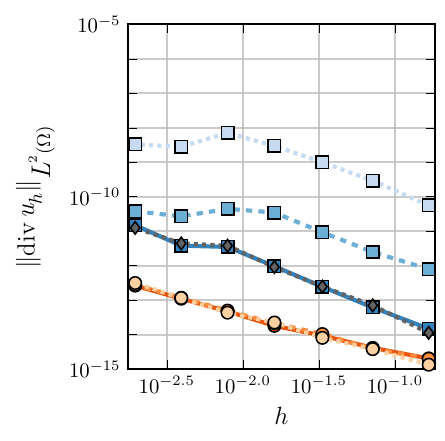}
     }\\
     \vspace{-1.2em} 
\caption{Cut square problem: $h$-convergence test using $\Gamma = \boup$. For the \texttt{AL-BGP} method, $\tau_{\mathrm{d}}= \tau_{\mathrm{0}}=10^{0}$ is used.}  \label{fig:hconvergence}
\end{figure}

\subsubsection{Sensitivity with respect to $\gamma$} We employ the manufactured solution in \eqref{eq:manufactured.sol.smooth} to investigate the influence of the penalty parameter $\gamma$ on the accuracy of the different methods for the case of pure flux boundary conditions ($\Gamma = \bouu$).
The $h$-convergence results in Figure~\ref{fig:hconvergence.B=u} show that all methods behave robustly with respect to the choice of the penalty parameter $\gamma$.
\begin{figure}[h] \centering 
    {
        \includegraphics[height=0.052\textheight]{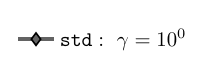}
    }\hspace{-0.7em}
    {
        \includegraphics[height=0.052\textheight]{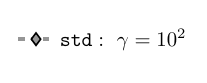}
    }\hspace{-0.7em}
    {
        \includegraphics[height=0.052\textheight]{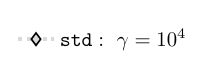}
    }\\\vspace{-1.2em}
    {
        \includegraphics[height=0.052\textheight]{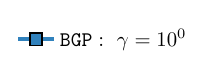}
    }\hspace{-0.7em}
    {
        \includegraphics[height=0.052\textheight]{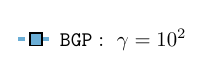}
    }\hspace{-0.7em}
    {
        \includegraphics[height=0.052\textheight]{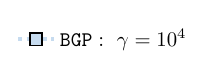}
    }\\\vspace{-1.2em}
    {
        \includegraphics[height=0.052\textheight]{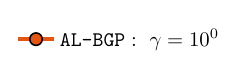}
    }\hspace{-1.0em}
    {
        \includegraphics[height=0.052\textheight]{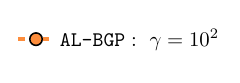}
    }\hspace{-1.0em}
    {
        \includegraphics[height=0.052\textheight]{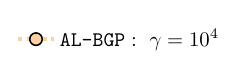}
    }\\\vspace{-1.0em}
     {
        \raisebox{0.1\textwidth}{\rotatebox[origin=t]{90}{\Small \qquad \qquad \quad $h_{\mathrm{cut}}/h = 5.0\cdot10^{-1}$}}   
     }\hspace{-0.7em} 
     {
        \includegraphics[height=0.307\textwidth]{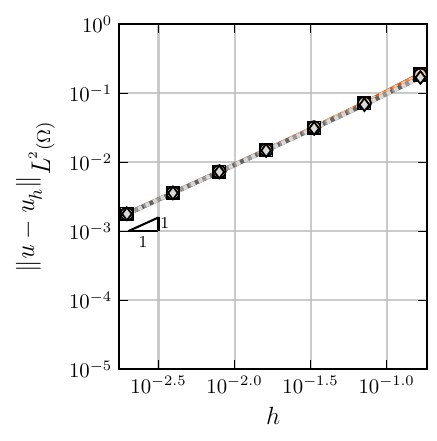}
     }\hspace{-0.9em}
     {
        \includegraphics[height=0.307\textwidth]{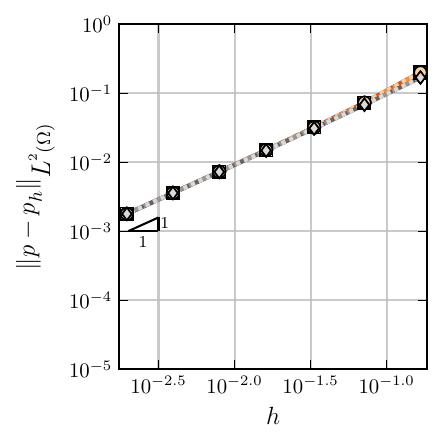}
     }\hspace{-0.9em}
     {
        \includegraphics[height=0.307\textwidth]{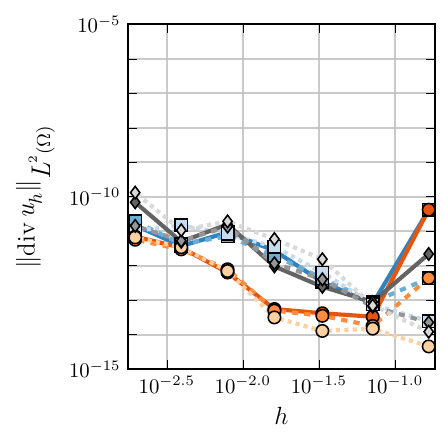}
     }
      \\\vspace{-1.2em}
    {
        \raisebox{0.1\textwidth}{\rotatebox[origin=t]{90}{\Small \qquad \qquad \quad $h_{\mathrm{cut}}/h = 5.0\cdot10^{-7}$}}   
     }\hspace{-0.7em} 
     {
        \includegraphics[height=0.307\textwidth]{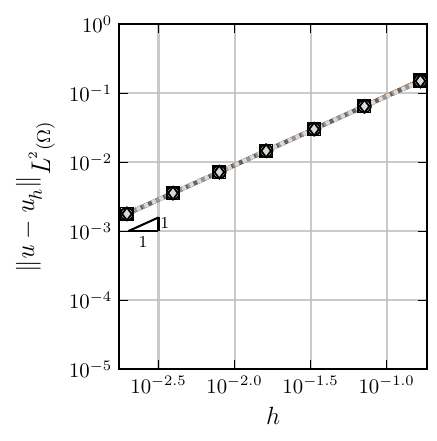}
     }\hspace{-0.9em}
     {
        \includegraphics[height=0.307\textwidth]{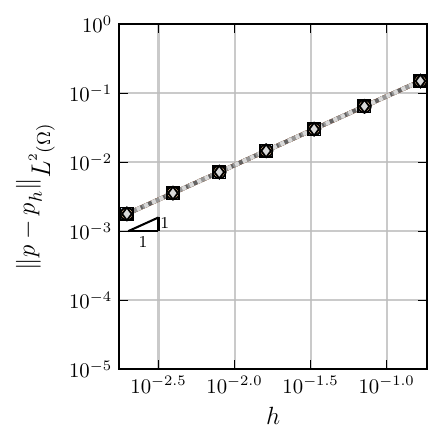}
     }\hspace{-0.9em}
     {
        \includegraphics[height=0.307\textwidth]{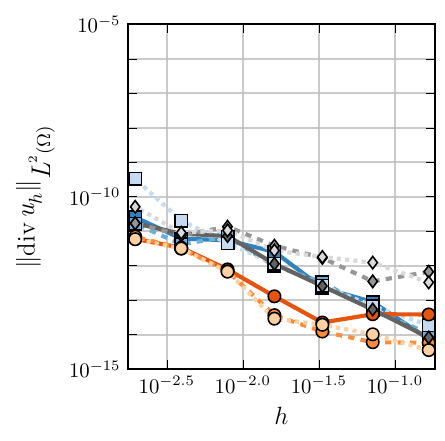}
     }\\
     \vspace{-1.2em} \centering
\caption{Cut square problem: $h$-convergence test using $\Gamma = \bouu$. The stabilisation parameters are set to $\tau_{\bullet}=10^{0}$.}
\label{fig:hconvergence.B=u}
\end{figure}
\subsubsection{Pressure robustness}
We consider the following manufactured solution to study the pressure robustness of the different methods:
\begin{equation}
    p(x,y) = \sin(\pi x)-\sin(\pi y), \qquad {u}(x,y) = (x,-y),
\end{equation}
where $p$ is not in $\Qh$, while $u$ is exactly representable in $\Vh$. Figure~\ref{fig:hconvergence.pres.robust} shows the $h$-convergence results for the case of pure pressure boundary conditions ($\Gamma = \boup$). We confirm that adding the bulk-based stabilisation does not affect pressure robustness, as the flux error norms are at machine precision. This behaviour holds for both cut configurations considered.
\begin{figure}[h] \centering 
    {
        \includegraphics[height=0.052\textheight]{fig_legend_Conv_B=p_std.pdf}
    }\hspace{-0.7em}
    {
        \includegraphics[height=0.052\textheight]{fig_legend_Conv_B=p_BGP_1.0.pdf}
    }\hspace{-0.7em}
    {
        \includegraphics[height=0.052\textheight]{fig_legend_Conv_B=p_BGP_100.0.pdf}
    }\hspace{-0.7em}
    {
        \includegraphics[height=0.052\textheight]{fig_legend_Conv_B=p_BGP_10000.0.pdf}
    }\\\vspace{-1.2em}
    {
        \includegraphics[height=0.052\textheight]{fig_legend_Conv_B=p_ALdBGP_1.0.pdf}
    }\hspace{-1.0em}
    {
        \includegraphics[height=0.052\textheight]{fig_legend_Conv_B=p_ALdBGP_100.0.pdf}
    }\hspace{-1.0em}
    {
        \includegraphics[height=0.052\textheight]{fig_legend_Conv_B=p_ALdBGP_10000.0.pdf}
    }\\\vspace{-1.0em}
     {
        \raisebox{0.1\textwidth}{\rotatebox[origin=t]{90}{\Small \qquad \qquad \quad $h_{\mathrm{cut}}/h = 5.0\cdot10^{-1}$}}   
     }\hspace{-0.73em} 
     {
        \includegraphics[height=0.307\textwidth]{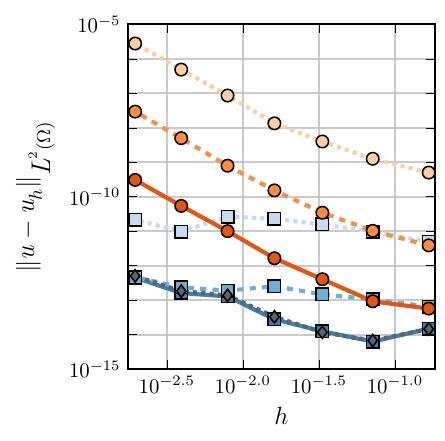}
     }\hspace{-0.92em}
     {
        \includegraphics[height=0.307\textwidth]{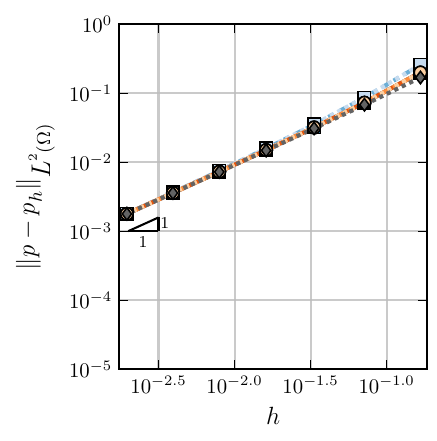}
     }\hspace{-0.92em}
     {
        \includegraphics[height=0.307\textwidth]{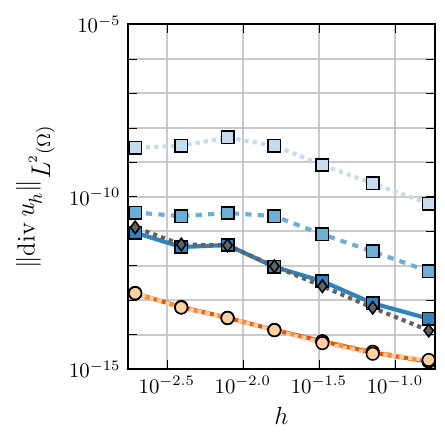}
     }
      \\\vspace{-1.2em}
    {
        \raisebox{0.1\textwidth}{\rotatebox[origin=t]{90}{\Small \qquad \qquad \quad $h_{\mathrm{cut}}/h = 5.0\cdot10^{-7}$}}   
     }\hspace{-0.73em} 
     {
        \includegraphics[height=0.307\textwidth]{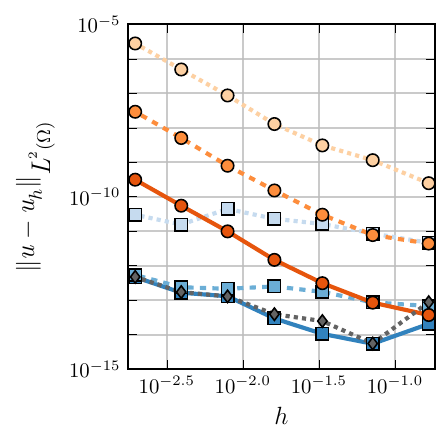}
     }\hspace{-0.92em}
     {
        \includegraphics[height=0.307\textwidth]{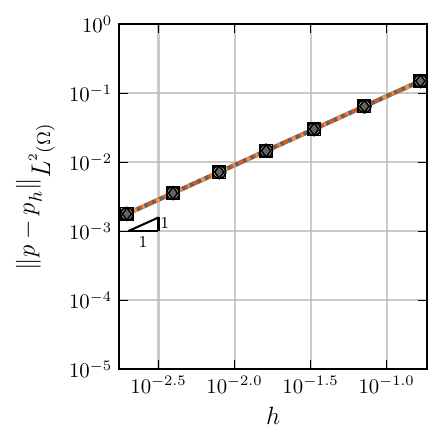}
     }\hspace{-0.92em}
     {
        \includegraphics[height=0.307\textwidth]{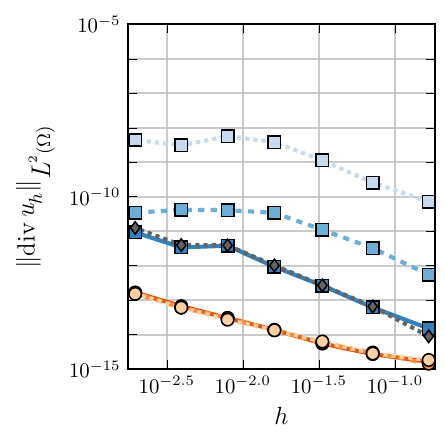}
     }\\
     \vspace{-1.2em} 
\caption{
    Cut square problem: pressure robustness $h$-convergence test using $\Gamma = \boup$. For the \texttt{AL-BGP} method, $\tau_{\mathrm{d}}= \tau_{\mathrm{0}}=10^{0}$ is used.}      \label{fig:hconvergence.pres.robust} 
\end{figure}
\subsubsection{Conditioning with respect to cut-cell length}
We again consider the manufactured solution in \eqref{eq:manufactured.sol.smooth} to study the dependence of the condition number on the cut-cell length. To this end, we fix the mesh refinement level ($n=32$) and vary $h_\mathrm{cut}$ to create physical domains with different cut-cell length ratios $h_{\mathrm{cut}}/h$.
Figure~\ref{fig:VarCut} shows that the condition number of the unstabilised method (\texttt{std}) grows inversely proportionally to the cut-cell length ratio. By contrast, both stabilised methods (\texttt{BGP} and \texttt{AL-BGP}) exhibit condition numbers that are independent of the cut-cell length ratio. All error norms are also independent of the cut-cell length ratio, confirming the theoretical results.
\begin{figure}[h] \centering 
    \begin{minipage}[]{\textwidth}
    \hspace{0.12em}
     {
      \includegraphics[height=0.307\textwidth]{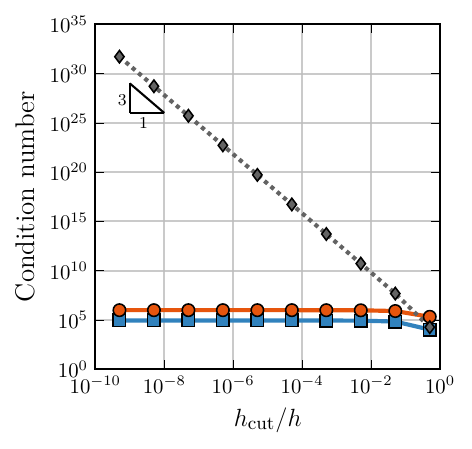}  
     }\hspace{-0.9em} 
     {
     \includegraphics[height=0.307\textwidth]{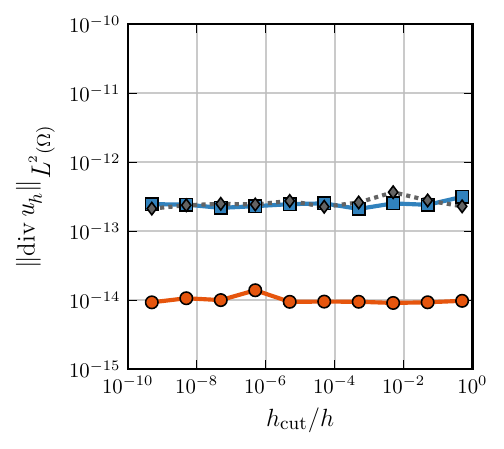}
     }\\
     {
        \includegraphics[height=0.307\textwidth]{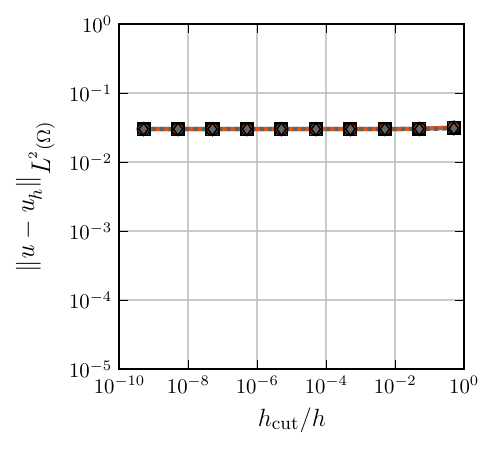}
     }\hspace{-0.9em}
     {
        \includegraphics[height=0.307\textwidth]{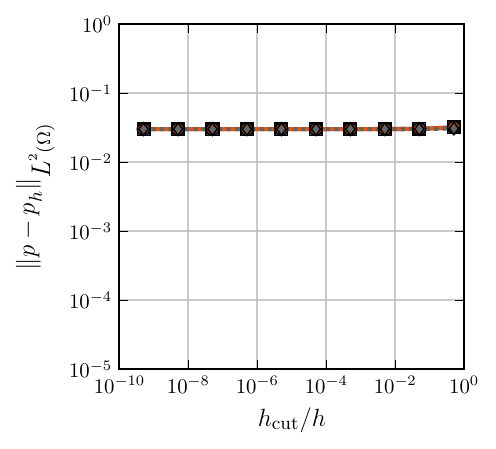}
     }\vspace{-1.0em}
    \end{minipage} \hspace{-13.0em}
    \begin{minipage}{0.12\textwidth}
        \vspace{-12em}
        \raggedright
     {
        \includegraphics[height=0.052\textheight]{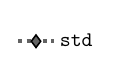}
     }\\\vspace{-1.2em}
    {
        \includegraphics[height=0.052\textheight]{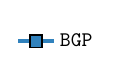}
    }\\\vspace{-1.2em}
    {
        \includegraphics[height=0.052\textheight]{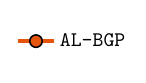}
    }
    \end{minipage}
\caption{Cut square problem for varying cut length ratios $h_{\mathrm{cut}}/h$ using $\Gamma = \boup \cup \bouu$ $(n=32)$. The stabilisation parameters are set to $\tau_{\bullet}=10^{0}$ and the penalty parameter $\gamma=10^{0}$.}
\label{fig:VarCut}
\end{figure}

\subsection{Face-based stabilisation}\label{subsec:num-results-face}
{In this section, we study the performance of the method when using the face-based stabilisation \eqref{eq:shface} described in Section~\ref{sec:face-stab}. We call this method \texttt{FGP}.} Using two numerical examples, we illustrate that this equivalent stabilisation procedure also produces optimal condition-number scalings and convergence rates in the $L^2(\Omega)$-norm, in particular for the divergence. We use triangular meshes and compare three element pairs of increasing order, namely $\mathbb{RT}_0\times \mathbb{P}_0,\, \mathbb{BDM}_1\times \mathbb{P}_0,\, \mathbb{RT}_1\times \mathbb{P}_1$.
For all examples, we choose {the penalty parameter $\gamma=1$ and all} the stabilisation parameters to be equal to one, $\tau_\bullet = 1$. We emphasise that this choice can be optimised for smaller errors. We choose the aggregation parameter $\delta = 0.25$.

These tests were carried out on a laptop with an Intel Core i7-8565U CPU and 16GB RAM, using the CutFEM library \cite{cutfemlib} written in C++. Each linear system was solved with the direct solver UMFPACK and the 1-norm estimate of the condition number was evaluated using the method \texttt{condest} in MATLAB.

\subsubsection{Flow in a rectangle generated by a linearly varying source $g$}
{For this example, we compare the unstabilised method (\texttt{std}) with the proposed \texttt{FGP} method. We numerically solve the Darcy system \eqref{eq:darcy} in $\Omega = [0,1]\times [0,0.5]$ with $\partial\Omega = \Gamma_u$}, 
where the exact solution is $u(x,y)=(x(x-1),y(y-1/2))$ and $p(x,y)=-(x^3/3-x^2/2+y^3/3-y^2/4)$.
Notice that in this example $g$ is linear. Since $\dv{\mathbb{RT}_1}\subset \mathbb{P}_1$, we expect machine-precision errors for the divergence with this choice of elements.

In Figure~\ref{fig:rect.example}, we show the optimal convergence and condition-number scaling of the \texttt{FGP} stabilisation method. Notice that the divergence converges optimally when the pressure space does not include $g$ and that machine precision is achieved (up to roundoff errors) when using $\dv{\mathbb{RT}_1}\subset \mathbb{P}_1$. 
In Table~\ref{tab:Linftydiverrors.rectangle}, we report that the same is true for {the $L^{\infty}(\Omega)$-errors of $\dv{u_h}$. The unstabilised method achieves optimal convergence rates for the lowest-order pair $\mathbb{RT}_0\times \mathbb{P}_{0}$, but the $L^{\infty}(\Omega)$-errors of $\dv{u_h}$ deteriorate for higher-order pairs due to severe ill-conditioning of the resulting linear systems.}
\begin{figure}[h] \centering 
    \begin{minipage}[]{\textwidth}
    \hspace{0.12em}
     {
      \includegraphics[height=0.307\textwidth]{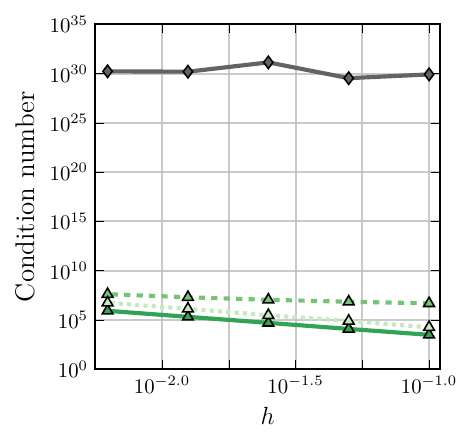}  
     }\hspace{-0.9em} 
     {
     \includegraphics[height=0.307\textwidth]{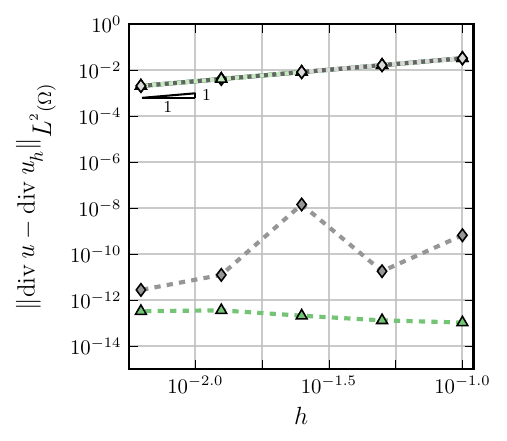}
     }\\
     {
        \includegraphics[height=0.307\textwidth]{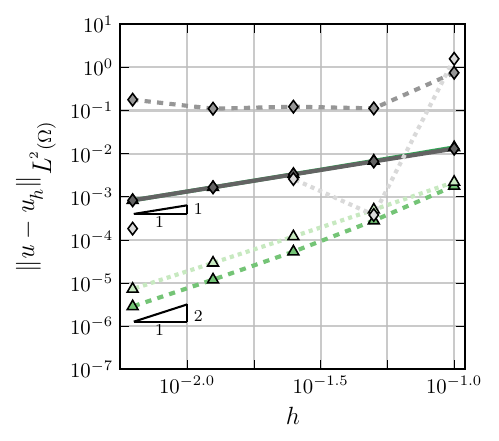}
     }\hspace{-0.9em}
     {
        \includegraphics[height=0.307\textwidth]{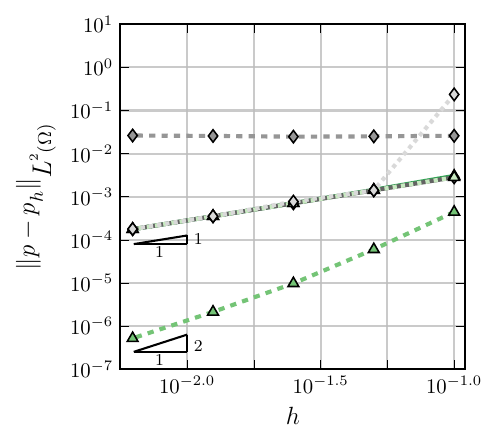}
     }\vspace{-1.0em}
    \end{minipage} \hspace{-12.5em}
    \begin{minipage}{0.12\textwidth}
        \vspace{-9em}
        \raggedright
     {
        \includegraphics[height=0.052\textheight]{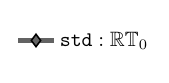}
     }\\\vspace{-1.2em}
    {
        \includegraphics[height=0.052\textheight]{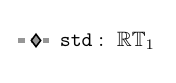}
    }\\\vspace{-1.2em}
    {
        \includegraphics[height=0.052\textheight]{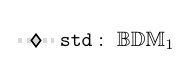}
    }\\\vspace{-1.2em}
    {
        \includegraphics[height=0.052\textheight]{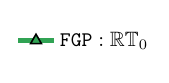}
     }\\\vspace{-1.2em}
    {
        \includegraphics[height=0.052\textheight]{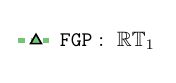}
    }\\\vspace{-1.2em}
    {
        \includegraphics[height=0.052\textheight]{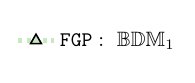}
    }
    \end{minipage}
\caption{Rectangle problem. Unstabilised method (\texttt{std}) compared with the \texttt{FGP} stabilisation method. For the \texttt{FGP} method, $\tau_{\bullet}= 1$ and $\delta = 0.25$ are used.}
\label{fig:rect.example}
\end{figure}
\begin{table}[h]
\caption{{Rectangle problem. $L^{\infty}(\Omega)$-errors for  $\dv{u_h}$.}}
\label{tab:Linftydiverrors.rectangle}
\begin{tabular}{@{}lllllll@{}}
\toprule
$h$ & \texttt{FGP} $(\mathbb{RT}_0)$ & \texttt{std} $(\mathbb{RT}_0)$ & \texttt{FGP} $(\mathbb{BDM}_1)$ & \texttt{std} $(\mathbb{BDM}_1)$ & \texttt{FGP} $(\mathbb{RT}_1)$ & \texttt{std} $(\mathbb{RT}_1)$ \\
\midrule
0.1     & 0.1333 & 0.1333 & 0.1333 & 128.1    & 1.7625e-12 &  25395      \\
0.05    & 0.0667 & 0.0667 & 0.0667 & 1.5      & 3.7923e-12 &  3585.4      \\
0.025   & 0.0333 & 0.0333 & 0.0333 & 96.35    & 1.3436e-11 &  5.2429e+06 \\
0.0125  & 0.0167 & 0.0167 & 0.0167 & 0.1341   & 2.5998e-11 &  4352.5     \\
0.00625 & 0.0083 & 0.0083 & 0.0083 & 100.16   & 3.9275e-11 &  319.51     \\
\bottomrule
\end{tabular}
\end{table}

\subsubsection{{Transmission over an interface}}
{In this experiment, we consider the Darcy interface problem in \cite[Equations (2.10)--(2.15)]{Frachon2024-fg}, originally introduced in \cite{DAngelo-EsaimMathModelNumerAnal-2012r}. We combine the unfitted interface discretisation from~\cite{Frachon2024-fg} with our approach for the weak imposition of boundary conditions, thereby obtaining a fully unfitted scheme.} {Let the domain $\Omega$ be an annular domain centred at $x_c = (0.5, 0.5)$, with inner radius $r_1 = 0.15$ and outer radius $r_2 = 0.45$. This domain is embedded in the background unit square $\Omega_h^{\mathrm{bg}} = [0, 1]^2$.} 
We introduce a circular interface $\Gamma^{\mathrm{int}}$ located at the midpoint radius $r_{\mathrm{int}} = 0.3$. This interface partitions the domain $\Omega$ into two subdomains: $\Omega_1$, the inner region where $r_1 < r < r_{\mathrm{int}}$, and $\Omega_2$, the outer region where $r_{\mathrm{int}} < r < r_2$. 
{We consider mixed boundary conditions, i.e. $\partial\Omega = \Gamma_u \cup \Gamma_p$, where $\Gamma_p$ and $\Gamma_u$ are defined at the inner and outer boundaries of the annular domain $\Omega$, respectively.} 
We impose {pressure-based} transmission conditions across the interface $\Gamma^{\mathrm{int}}$. Let the average operator be defined as $\{p\} = (p|_{\Omega_1} + p|_{\Omega_2})/2$. The jump and average conditions are governed by:
\[
    [p] = \eta_{\mathrm{int}} \{u \cdot n\}, \qquad
    \{p\} = \hat{p} + \frac{\eta_{\mathrm{int}}}{8} [u \cdot n].
\]
The interface-dependent parameters $\eta_{\mathrm{int}}$ and $\hat{p}$ are defined as:
\[
    \eta_{\mathrm{int}} = \frac{2 r_{\mathrm{int}}}{4 \cos(r_{\mathrm{int}}^2) + 3}, \qquad
    \hat{p} = \frac{19r_{\mathrm{int}}^2 + 12\sin(r_{\mathrm{int}}^2) + 8\sin(2r_{\mathrm{int}}^2) + 24r_{\mathrm{int}}^2\cos(r_{\mathrm{int}}^2)}{4r_{\mathrm{int}}^2(4\cos(r_{\mathrm{int}}^2)+3)}.
\]
We consider the following discrete formulation of the problem:
Find $(u_h,p_h)\in \Vh \times \Qh$ such that
\begingroup
\mathtoolsset{showonlyrefs=false}
\begin{subequations}\label{eqs:discrete-interface}
\begin{align}
	a_h(u_h,v_h) + \tilde \bbf_h(v_h,p_h) &= (f,v_h)_{\Omega_1 \cup \Omega_2} 
	- (p_\Gamma,v_h\cdot n)_{\Gamma_p} + \gamma h^{-1} (u_\Gamma,v_h \cdot n )_{\bouu} \nonumber \\
    &\quad - (\hat{p}, [v_h\cdot n])_{\Gamma^{\mathrm{int}}}, \quad \forall v_h\in \Vh, \\
	b_h(u_h,q_h) &= (g,q_h)_{\Omega_1 \cup \Omega_2}, \quad \forall q_h\in \Qh,
\end{align}
\end{subequations}
\endgroup
where  
\begin{align*}
	a_h(u_h,v_h) &:= (\eta u_h,v_h)_{\Omega_1 \cup \Omega_2} + \xivh s_h^d(u_h,v_h) + \gamma h^{-1} (u_h \cdot n, v_h \cdot n)_{\bouu}\\
    &\quad + (\eta_{\mathrm{int}} \{u_h\cdot n\},\{v_h\cdot n\})_{\Gamma^{\mathrm{int}}} + (\eta_{\mathrm{int}} [u_h\cdot n], [v_h\cdot n])_{\Gamma^{\mathrm{int}}}.
\end{align*}
{Equations \eqref{eqs:discrete-interface} define {the \texttt{FGP} method}. To obtain the unstabilised method (\texttt{std}), one sets all stabilisation parameters $\tau_\bullet = 0$.}

To validate the method, we utilise the following manufactured exact solutions. The pressure field $p$ is given by:
\begin{equation*}
    p(x,y) = \sin(x^2 + y^2) + 
    \begin{cases} 
        \frac{r^2}{2r_{\mathrm{int}}^2} + \frac{3}{2} & \text{in } \Omega_1, \\
        \frac{r^2}{r_{\mathrm{int}}^2} & \text{in } \Omega_2.
    \end{cases}
\end{equation*}
The flux field $u$ is defined as:
\begin{equation*}
    u(x,y) = -\frac{(x, y)}{r_{\mathrm{int}}^2}
    \begin{cases} 
        1 + 2\cos(x^2 + y^2) & \text{in } \Omega_1, \\
        2(1 + \cos(x^2 + y^2)) & \text{in } \Omega_2.
    \end{cases}
\end{equation*}
Here, note that $\dv{u}$ is quadratic, so we investigate its convergence rate. 
In Figure~\ref{fig:interface.example}, we show that we obtain optimal convergence rates and condition-number scaling for the considered element pairs using {the \texttt{FGP} method}.
{As in the previous example, we see in Table~\ref{tab:Linftydiverrors.interface} that the divergence also converges optimally in $L^{\infty}(\Omega)$ for {the \texttt{FGP} method}.}
\begin{figure}[h] \centering 
    \begin{minipage}[]{\textwidth}
    \hspace{0.12em}
     {
      \includegraphics[height=0.307\textwidth]{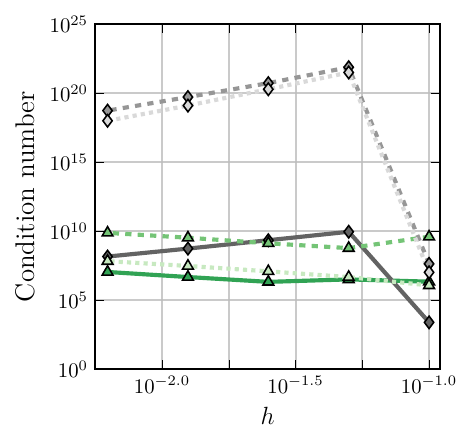}  
     }\hspace{-0.9em} 
     {
     \includegraphics[height=0.307\textwidth]{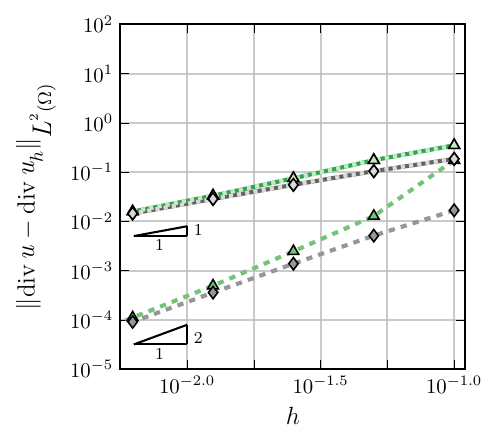}
     }\\
     {
        \includegraphics[height=0.307\textwidth]{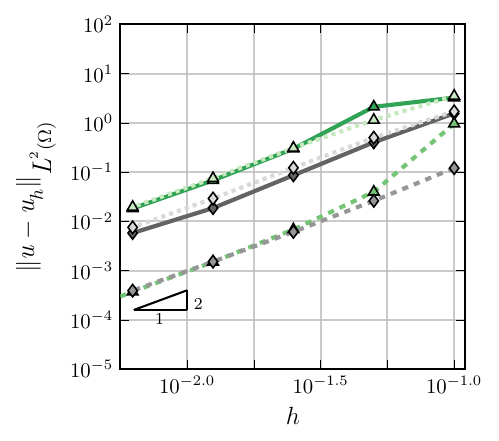}
     }\hspace{-0.9em}
     {
        \includegraphics[height=0.307\textwidth]{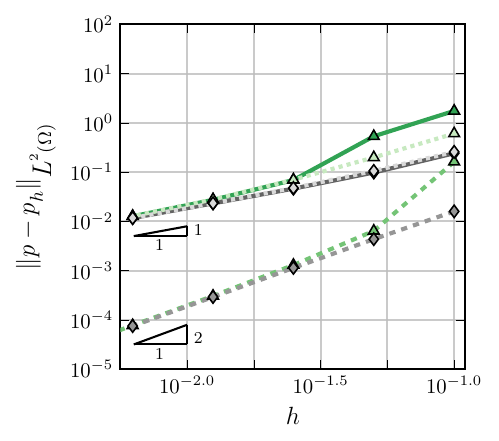}
     }\vspace{-1.0em}
    \end{minipage} \hspace{-12.5em}
    \begin{minipage}{0.12\textwidth}
        \vspace{-9em}
        \raggedright
     {
        \includegraphics[height=0.052\textheight]{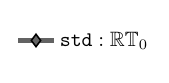}
     }\\\vspace{-1.2em}
    {
        \includegraphics[height=0.052\textheight]{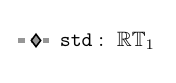}
    }\\\vspace{-1.2em}
    {
        \includegraphics[height=0.052\textheight]{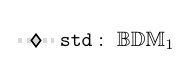}
    }\\\vspace{-1.2em}
    {
        \includegraphics[height=0.052\textheight]{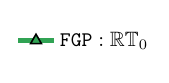}
     }\\\vspace{-1.2em}
    {
        \includegraphics[height=0.052\textheight]{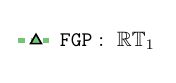}
    }\\\vspace{-1.2em}
    {
        \includegraphics[height=0.052\textheight]{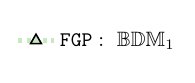}
    }
    \end{minipage}
\caption{{Interface problem. Comparison between \texttt{std} and \texttt{FGP}. For the stabilised method, $\tau_{\bullet}= 1$ and $\delta = 0.25$ is used.}}
\label{fig:interface.example}
\end{figure}
\begin{table}[h]
\caption{{Interface problem. $L^{\infty}(\Omega)$-errors for $\dv{u_h}$.}}
\label{tab:Linftydiverrors.interface}
\centering
\begin{tabular}{@{}llllll@{}}
\toprule {$h$}
& $1/10$ & $1/20$ & $1/40$ & $1/80$ & $1/160$ \\
\midrule
\texttt{FGP} $(\mathbb{RT}_0)$ & 2.4550 & 2.0023 & 1.1309 & 0.6214 & 0.3288 \\
\texttt{FGP} $(\mathbb{BDM}_1)$ & 2.4550 & 2.0022 & 1.1309 & 0.6214 & 0.3288 \\
\texttt{FGP} $(\mathbb{RT}_1)$ & 2.2330 & 0.2669 & 0.0770 & 0.0203 & 0.0054 \\
\bottomrule
\end{tabular}
\end{table}
\section{Conclusions}
\label{sec:conclusions}

We have introduced divergence-free unfitted mixed finite element discretisations for Darcy flow that preserve pointwise discrete mass conservation and remain stable for arbitrarily small cut cells. The formulation admits both cell-wise (bulk) and face-based ghost-penalty realisations. We proved stability and error estimates, including pressure-robust flux bounds for pure pressure boundary conditions. The error estimates are optimal in all considered cases, except in the presence of flux boundary conditions combined with trimmed polynomial spaces. The numerical experiments show that the methods deliver optimal convergence, cut-independent conditioning, and robust performance across different boundary conditions and parameter regimes.

\section{Acknowledgements}
This research was partially funded by the Australian Government through the Australian Research Council (project numbers DP210103092 (SB), DP220103160 (SB, AB \& RRB), and FT220100496 (RRB)). EN and SZ were supported by the Swedish Research Council (Grant No. 2022–04808) and the Wallenberg Academy Fellowship (KAW 2019.0190). Computational resources were provided by the Australian Government through the National Computational Infrastructure (NCI) under the National Computational Merit Allocation Scheme (NCMAS) and the ANU Merit Allocation Scheme, and by Monash eResearch through the Monash NCI scheme for HPC services. The authors also wish to thank MATRIX (Mathematical Research Institute) for their generous support in hosting the workshop "Numerical Analysis of Interface and Multiphysics Problems" (5–16 May 2025) in Creswick, Victoria, where the discussions and collaborations that led to this work were initiated.
\printbibliography

\bigskip\noindent\textbf{Reproducibility.} 
The routines used to generate the numerical results in Section \ref{subsec:num-results-bulk} are available at \url{https://doi.org/10.5281/zenodo.19199178}. The simulations in Section \ref{subsec:num-results-face} used a different computational framework.

\end{document}